\documentclass[10pt]{amsart}
\usepackage{graphicx, xcolor}
\usepackage{amssymb}
\numberwithin{equation}{section}


\def\N{\mathbb{N}}
\def\R{\mathbb{R}}

\newcommand{\rdbrack}{]\!]}
\newcommand{\ldbrack}{[\![}


\newtheorem{theorem}{Theorem}[section]
\newtheorem{proposition}[theorem]{Proposition}
\newtheorem{lemma}[theorem]{Lemma}
\newtheorem{corollary}[theorem]{Corollary}
\newtheorem{example}[theorem]{Example}

\voffset-.75cm
\hoffset-1.25cm
\textheight21cm
\textwidth14cm

\title{Metastability for nonlinear parabolic\\ equations with application to scalar\\ viscous conservation laws} 


\begin{document}

\maketitle

\begin{center}
CORRADO MASCIA\footnote{Dipartimento di
  Matematica ``G. Castelnuovo'', Sapienza -- Universit\`a di Roma, P.le Aldo
  Moro, 2 - 00185 Roma (ITALY), 
  {\sc and} {Istituto per le Applicazioni del Calcolo, Consiglio Nazionale delle Ricerche
  (associated in the framework of the program ``Intracellular Signalling'')}, \texttt{mascia@mat.uniroma1.it}}, MARTA STRANI\footnote{Dipartimento di
 Matematica ``G. Castelnuovo'', Sapienza -- Universit\`a di Roma, P.le Aldo
 Moro, 2 - 00185 Roma (ITALY), Tel. +390649913406, E-mail address: \texttt{strani@mat.uniroma1.it}. }
\end{center}
\vskip1cm
\begin{abstract}
The aim article is to contribute to the definition of a versatile language for metastability
in the context of partial differential equations of evolutive type.
A general framework suited for parabolic equations in one dimensional bounded domains is proposed,
based on choosing a family of approximate steady states $\{U^{\varepsilon}(\cdot;\xi) \}_{{}_{\xi \in J}}$
and on the spectral properties of the linearized operators at such states.
The slow motion for solutions belonging to a cylindrical neighborhood of the family $\{U^{\varepsilon}\}$
is analyzed by means of a system of an ODE for the parameter $\xi=\xi(t)$, coupled with a PDE
describing the evolution of the perturbation $v:=u-U^\varepsilon(\cdot;\xi)$.

We state and prove a general result concerning the reduced system for the couple $(\xi,v)$,
called {\sf quasi-linearized system},  obtained by disregarding the nonlinear term in $v$,
and we show how such approach suits to the prototypical example of scalar viscous
conservation laws with Dirichlet boundary condition in a bounded one-dimensional interval 
with convex flux.
\end{abstract}

\maketitle

\vskip0.3cm
{\bf{ \small Key words}} 
Metastability;
slow motion;
spectral analysis;
viscous conservation laws.

\vskip0.3cm
{\bf {\small AMS subject classification}}
35B25 (35P15, 35K20)

\pagestyle{myheadings}
\thispagestyle{plain}
\markboth{C. Mascia and M. Strani} {Metastability for nonlinear parabolic equations}

\section{Introduction}\label{Sect:Intro}

Metastability is a broad term describing the existence of a very sensitive 
equilibrium, possessing a weak form of stability/instability.
Usually, such behavior is related to the presence of a small first eigenvalue for the linearized operator 
at the given equilibrium state, revealed at dynamical level by the appearance of slowly moving structures. 
Such circumstance comes into view in the analysis of different classes of evolutive PDEs, and it 
has been object of a wide amount of studies, covering many different areas.
Among others, we emphasize  the explorations on the Allen--Cahn equation, started in 
\cite{CarrPego89, FuscHale89}, and the investigations on the Cahn--Hilliard equation, with the 
fundamental contributions  \cite{Pego89, AlikBateFusc91}.
The analysis has been continued by many other scholars by means of a broad spectrum of techniques,
and extended to a number of different models such as the Gierer--Meinhardt and Gray--Scott 
systems (see \cite{SunWardRuss05}), Keller--Segel chemotaxis system (see \cite{DolaSchm05, PotaHill05}),
general gradient flows (see \cite{OttoRezn07}) and many others.
The number of references is so vast that it would be impossible to mention all the contributions 
given in the area.

A pionereeing article in the analysis of slow dynamics for parabolic equations has been authored
by G.~Kreiss and H.-O.~Kreiss \cite{KreiKrei86} and concerns with the scalar viscous conservation law
\begin{equation}\label{burgers}
	\partial_t u + \partial_x f(u)=\varepsilon\,\partial_x^2 u, \qquad u(x,0)=u_0(x)
\end{equation}
with the space variable $x$ belonging to a one-dimensional interval $I=(-\ell,\ell)$, $\ell>0$.
The primary prototype for the flux function $f$ is given by the classical quadratic formula
$f(u)=\frac12\,u^2$, so that partial differential equation in \eqref{burgers} becomes the
so-called  {\it (viscous) Burgers equation}.
The parameter $\varepsilon>0$ is small.
Problem \eqref{burgers} is complemented with Dirichlet boundary conditions
\begin{equation}\label{bdary}
	u(-\ell,t)=u_-\qquad\textrm{and}\qquad u(\ell,t)=u_+
\end{equation}
for given data $u^\pm$ to be discussed in details.

Burgers equation is considered as a (simplified) archetype of more complicate systems 
of partial differential equations arising in different fields of applied mathematics.
Inspired by the equations of fluid-dynamics, the parameter $\varepsilon$ is interpreted as
a {\it viscosity coefficient} and the main problem is to identify and quantify its r\^ole in the emergence
and/or disappearance of structures.

Formally, in the limit $\varepsilon\to 0^+$, the initial value problem \eqref{burgers}
reduces to a first-order quasi-linear equation of hyperbolic type
\begin{equation}\label{unviscburgers}
	\partial_t u + \partial_x f(u)=0, \qquad u(x,0)=u_0(x)
\end{equation}
whose standard setting is given by the {\it entropy formulation}.
Hence solutions may have discontinuities, which propagate with speed $s$ such that 
\begin{equation*}
 s\ldbrack u\rdbrack=\ldbrack f(u)\rdbrack
 \qquad\qquad
 \textrm{({\sf Rankine--Hugoniot relation})}
\end{equation*}
and satisfy appropriate {\sf entropy conditions} (here $\ldbrack \cdot\rdbrack$ denotes the jump).
In addition, the treatment of the boundary conditions \eqref{bdary} is more delicate
than the parabolic case, because of the eventual appearance of boundary layers,
\cite{BardLeRoNede79}.

Concerning the flux function $f$, let us assume that, for some $c_0>0$,
\begin{equation}\label{fluxhyp}
	f''(u)\geq c_0>0,\qquad f'(u_+)<0<f'(u_-),\qquad f(u_+)=f(u_-),
\end{equation}
where $u_\pm$ are the boundary data prescribed in \eqref{bdary}.
The last two assumptions guarantee that a jump with left value $u_-$ and right value $u_+$ 
satisfy the entropy condition and has speed of propagation equal to zero,
as dictated by the Rankine--Hugoniot relation.
Therefore, the one-parameter family of functions $\{U_{{}_{\textrm{hyp}}}(\cdot;\xi)\}$ defined by
\begin{equation*}
	U_{{}_{\textrm{hyp}}}(x;\xi):=
		u_-\chi_{{}_{(-\ell,\xi)}}(x) +u_+\chi_{{}_{(\xi,\ell)}}(x)
\end{equation*}
(where $\chi_{{}_{I}}$ denotes the characteristic function of the set $I$)
is composed by stationary solutions of the equation in \eqref{unviscburgers}
satisfying the boundary conditions \eqref{bdary}.
The dynamics determined by  boundary-initial value problem \eqref{unviscburgers}-\eqref{bdary}
is simple: for any datum $u_0$ with bounded variation, the solution converges
{\it in finite time} to an element of $\{U_{{}_{\textrm{hyp}}}(\cdot;\xi)\}$ (see Section \ref{Sect:Appl}).
Hence, at the level $\varepsilon=0$, there are infinitely many stationary solutions, generating
a ``finite-time'' attracting manifold for the dynamics.
\vskip.15cm

For $\varepsilon>0$, the situation is different. 
Apart from the well-known smoothing effect, the presence of the Laplace operator in \eqref{burgers}
has the effect of a drastic reduction of the number of stationary solutions satisfying \eqref{bdary}:
from infinitely many to a single stationary state (see Section \ref{Sect:Appl}).
Such solution, denoted here by 
$\bar U_{{}_{\textrm{par}}}^\varepsilon=\bar U_{{}_{\textrm{par}}}^\varepsilon(x)$,
converges in the limit $\varepsilon\to 0^+$ to a specific element  $U_{{}_{\textrm{hyp}}}(\cdot;\bar\xi)$ of the family 
$\{U_{{}_{\textrm{hyp}}}(\cdot;\xi)\}$.

The dynamical properties of \eqref{burgers}--\eqref{bdary} for initial data close to the equilibrium
configuration $\bar U_{{}_{\textrm{par}}}^\varepsilon$ can be analyzed linearizing at the state
$\bar U_{{}_{\textrm{par}}}^\varepsilon$
\begin{equation*}
	\partial_t u=\mathcal{L}_\varepsilon\,u
	:=\varepsilon\,\partial_x^2 u+\partial_x \bigl(a(x)\,u\bigr)
	\qquad\textrm{with}\;a(x):=-f'(\bar U_{{}_{\textrm{par}}}^\varepsilon(x)).
\end{equation*}
In \cite{KreiKrei86} it shown that, in the case of Burgers  flux $f(u)=\frac12\,u^2$, the eigenvalues 
of $\mathcal{L}_\varepsilon$ with homogeneous Dirichlet boundary conditions, are 
real and negative.
Moreover, as a consequence of the requiremente $f(u_+)=f(u_-)$, there holds as $\varepsilon\to 0$
\begin{equation*}
	\lambda_1^\varepsilon=O(e^{-1/\varepsilon})\quad \textrm{and}\quad
			\lambda_k^\varepsilon<-\frac{c_0}{\varepsilon}<0\qquad\forall\,k\geq 2
\end{equation*}
for some $c_0>0$ independent on $\varepsilon$.
Negativity of the eigenvalues implies that the steady state $\bar U_{{}_{\textrm{par}}}^\varepsilon$ is 
asymptotically stable with exponential rate; the precise description of the eigenvalue distribution
shows that the large time behavior is described by term of the order $e^{\lambda_1^\varepsilon\,t}$
and thus  the convergence is very slow when $\varepsilon$ is small
To quantify the reduction order of the mapping $\varepsilon\to e^{-1/\varepsilon}$,
note that $e^{-1/\varepsilon}$ has order $10^{-5}$ for $\varepsilon=10^{-1}$
and order $10^{-44}$ for $\varepsilon=10^{-2}$.

Such is the picture relative to the behavior determined by an initial data close to the 
equilibrium solution $\bar U_{{}_{\textrm{par}}}^\varepsilon$.
The next question concerns with the dynamics generated by initial data presenting a
sharp transition from $u^-$ to $u^+$ localized far from the position of the steady 
state $\bar U_{{}_{\textrm{par}}}^\varepsilon$.
\begin{figure}[ht]\label{fig1}\begin{center}
\includegraphics[width=9cm]{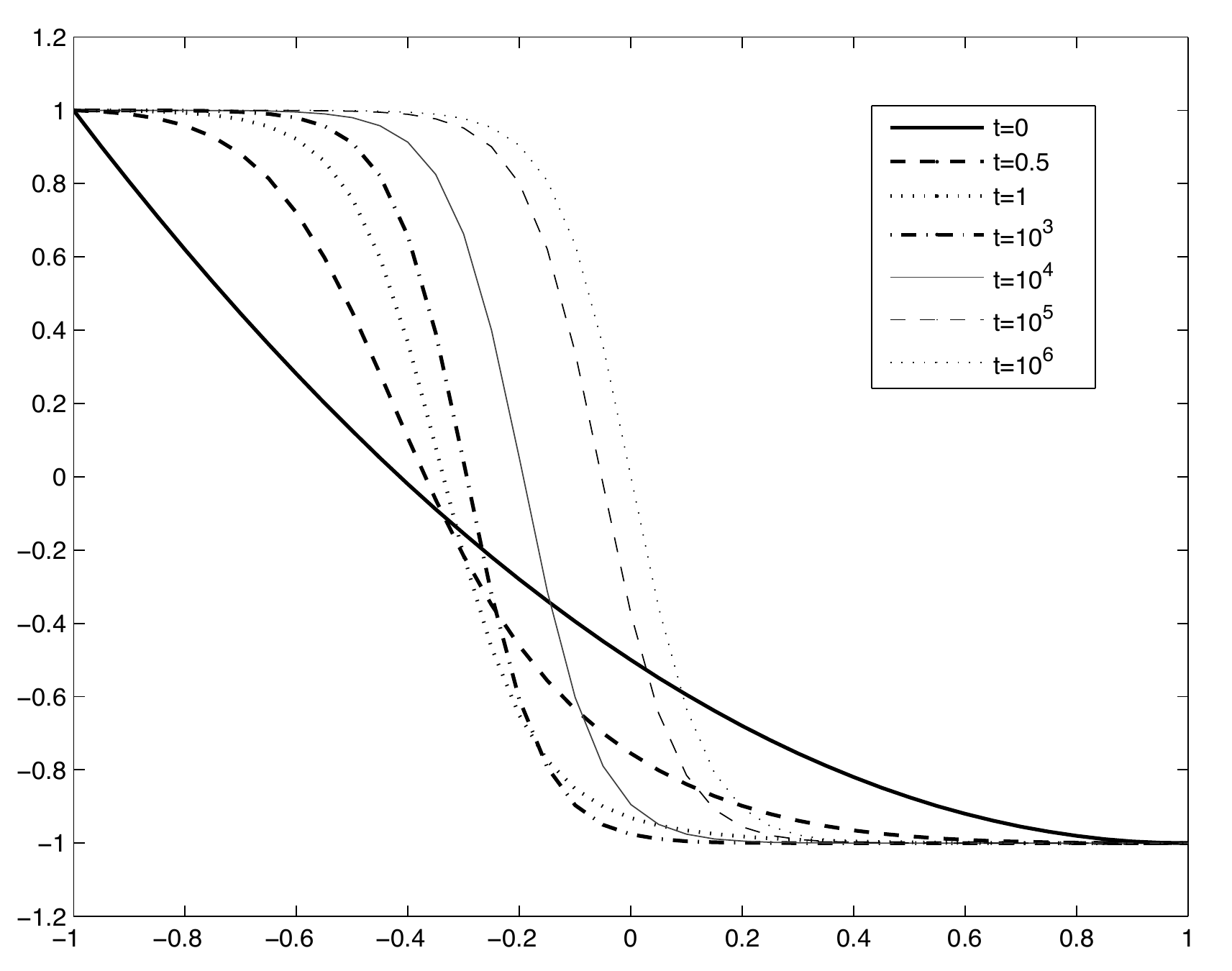}
\caption{\footnotesize The solution to \eqref{burgers}--\eqref{bdary}
with $\varepsilon=0.07$, $u_\pm=\mp 1$ and $u_0(x)=(x^2-2x-1)/2$.}
\end{center}\end{figure}
Figure \ref{fig1} represents a numerical simulation of the solution to the initial value
problem \eqref{burgers} with boundary conditions \eqref{bdary}, relative to the initial
condition $u_0(x)=(x^2-2x-1)/2$.
Starting with a decreasing initial datum, a shock layer is formed in a short time scale, 
so that the solution is approximately given by a translation of the (unique) stationary
solution of the problem.
Once such a layer is formed, on a longer time scale, it moves towards the location
corresponding to the equilibrium solution.

This article deals with the dynamics after the shock layer formation for $\varepsilon$ small.
In order to provide a detailed description of such regime, with special attention to the relation
between the unviscous and the low-viscosity behavior,  it is rational:\\
\indent -- to build up a one-parameter family of functions $\{U^\varepsilon_{{}_{\textrm{par}}}(\cdot;\xi)\}$ 
such that $U^\varepsilon_{{}_{\textrm{par}}}(\cdot;\xi)\to U^\varepsilon_{{}_{\textrm{hyp}}}(\cdot;\xi)$ as
$\varepsilon\to 0$, in an appropriate sense;\\
\indent -- to describe the dynamics of the solution to the initial-boundary value problem
\eqref{burgers}--\eqref{bdary} in a tubular neighborhood of the family
$\{U^\varepsilon_{{}_{\textrm{par}}}(\cdot;\xi)\}$.

A specific element $U^\varepsilon_{{}_{\textrm{par}}}(\cdot;\bar \xi)$ of the manifold
$\{U^\varepsilon_{{}_{\textrm{par}}}\}$ corresponds to the steady state
$\bar U_{{}_{\textrm{par}}}^\varepsilon$ of \eqref{burgers}--\eqref{bdary} and the dynamics
will asymptotically lead to such configuration.
\vskip.15cm

Before describing in details the contribution of the paper, let us recast the state of the art
on the topic.
Among others, the problem of slow dynamics for the Burgers equation
has been examined in \cite{ReynWard95b} and in \cite{LafoOMal95a}, 
where different approaches have been considered. 
The former is based either on {\it projection method} or on {\it WKB expansions}; the latter
stands on an adapted version of the {\it method of matched asymptotics expansion}.
The common aim is to determine an expression and/or an equation for the parameter $\xi$, considered
as a function of time, describing the movement of the transition from a 
generic point of the interval $(-\ell,\ell)$ toward the equilibrium location $\bar\xi$.
In both the contributions, the analysis is conducted at a formal level and validated numerically 
by means of comparision with significant computations.
A rigorous analysis has been performed in \cite{deGrKara98} (and generalized to the case
of nonconvex flux in \cite{deGrKara01}), where  one-parameter family of reference functions
is chosen as a family of traveling wave solutions to the viscous equation satisfying the boundary
conditions and with non-zero (but small) velocity. 
The approach is based on the use of such traveling waves to obtain upper and lower estimates
by the maximum principle, from which rigorous asymptotic formulae for the slow velocity
are obtained.

Slow motion for the viscous Burgers equation in unbounded domains has been also 
considered in literature.
In \cite{Shih95}, it is analyzed the case of the half-line $(0,+\infty)$
for the space variable $x$, with constant initial and boundary data chosen so that speed 
of the shock generated at $x=0$ is stationary for the corresponding hyperbolic equation. 
The presence of the viscosity generates a motion of the transition layer, which is precisely 
identified  by means of the Lambert's $W$ function.
Later, the (slow) motion of a shock wave, with zero hyperbolic speed, for the Burgers equation in 
the quarter plane has been considered in \cite{LiuYu97}, where it is shown that the 
location of the wave front is of order $\ln(1+t)$; the same result has been generalized in \cite{Nish01} 
in the case of general fluxes (for other contributions to the same problem, we refer also to 
\cite{LLLY04, WardReyn95}).

The case of the whole real line has been examined in \cite{KimTzav01} with emphasis on 
the generation of $N-$wave like structures and their evolution towards nonlinear diffusion waves.
The analysis is based on the use of self-similar variables, suggested by the invariance of the 
Burgers equation under the group of transformations $(x,t,u)\mapsto (cx,c^2\,t,u/c)$
(for subsequent contributions in the same direction, see \cite{KimNi02}).
More recently, it has been shown in \cite{BeckWayn09} that the slow motion is determined by the presence of a one-dimensional center manifold of 
steady states for the equation in the self-similar variables (corresponding to the diffusion waves) and 
a relative family of one-dimensional global attractive invariant manifold.
In a short-time scale, the solution approaches one of the attractive manifolds and remains close to
it in a long-time scale. 

At the present day, results relative to metastability in the case of systems appear to be rare. 
Slow dynamics analysis for systems of conservation laws have been considered in \cite{HubeSerr96},
basic model examples being the Navier-Stokes equations of compressible viscous heat 
conductive fluid and the Keyfitz-Kranzer system, arising in elasticity.
The approach is based on asymptotic expansions and consists in deriving appropriate limiting
equations for the leading order terms, in the case of periodic data.
In \cite{KreiKreiLore08}, the problem of proving convergence to a stationary solution for a system of
conservation laws with viscosity is addressed, with an approach based on a detailed
analysis of the linearized operator at the steady state.
A recent contribution is the reference \cite{BJRZ11}, where the authors consider the
Saint-Venant equations for shallow water and, precisely, the phenomenon of formation of roll-waves.
The approach merges together analytical techniques and numerical results to present some intriguing
properties relative to the dynamics of solitary wave pulses.
\vskip.15cm

Summing up, apart for the formal expansions methods, the rigorous approaches used 
in the literature are largely based on typical scalar equations features.
The first of these properties is the direct link between the scalar Burgers equation and the 
heat equation given by the Hopf--Cole transformation: $u=-2\varepsilon\,\phi^{-1}\partial_x \phi$, 
and the consequent invariance of the Burgers equation under the group of 
scaling transformations $(x,t,u)\mapsto (cx,c^2\,t,u/c)$.
On the one hand, the presence of such a connection is an evident advantage, since it permits to determine 
optimal descriptions for the behavior under study (see \cite{KimTzav01, LiuYu97, Shih95});
on the other hand, to use such exceptional property makes the approach very stiff
and difficult to apply to more general cases.
A different ``scalar hallmark'' is the base of the approach considered in \cite{deGrKara98},
where the authors make wide use of maximum principle and comparison arguments,
taking benefit from the fact that the equation is second-order parabolic.

In order to extend the results to more general settings and specifically for systems of PDEs,
it is useful to determine strategies and techniques that are more flexible, paying, if necessary, 
the price of a less accurate description of the dynamics.
A contribution in this direction has been given in \cite{Nish01}, where the location of 
the shock transition for a scalar conservation law in the quarter plane has been proved by means
of weighted energy estimates, extending the result proved in \cite{LiuYu97}, that used
an explicit formula --determined by means of the Hopf--Cole transformation-- for the Green function 
of the linearization at the shock profile of the Burgers equation.
\vskip.15cm

The present article intends to contribute to the definition of a versatile language for metastability,
suitable for general class of partial differential equations of evolutive type.
With this direction in mind, we follow an approach that it is strictly related with the 
{\it projection method} considered in \cite{CarrPego89, ReynWard95b} and we go behind 
the philosophy tracked in the analysis of stability of viscous shock waves by K.Zumbrun 
and co-authors  (see \cite{ZumbHowa98, MascZumb03, MascZumb04}).
Precisely, we separate three distinct phases:\par
\indent {\bf i.} to choose a family of functions $\{U^\varepsilon(\cdot;\xi)\}$,
considered as approximate solutions, and to measure how far they are from being exact solutions;\\
\indent {\bf ii.} to investigate spectral properties of the linearized operators at such states; \\
\indent {\bf iii.} to show that appropriate assumptions on the approximate solutions (step {\bf i}) 
and on the spectrum of the linearized operators (step {\bf ii})
imply the appearance of a metastable behavior.
\vskip.15cm

With respect to the framework of shock waves stability analysis, there are two main differences.
First of all, we concentrate on the case of bounded domains and, therefore, the spectrum of the
linearized operators is discrete.
Additionally, since the reference states $U^\varepsilon$ are approximate solutions, the perturbations
of such states satisfy at first order a {\it non-homogeneous} linear equation, with forcing term
negligible as $\varepsilon\to 0^+$.
The defect of working in a neighborhood of a manifold that is not invariant has the counterpart of a
wider flexibility in its construction that leads, in particular, to (more or less) explicit representations.
Thus, it should be possible in principle to obtain numerical evidence of special spectral properties
even in cases where analytical results appear to be not achievable.

The article is organized as follows.
To start with, in Section \ref{Sect:Abs},
we consider a general framework containing scalar viscous conservation laws as a very specific case. 
Given a family of approximate solutions $\{U^\varepsilon\}$, our approach consists in representing
the solution to the initial-bondary value problem as the sum of an element $U^\varepsilon(\cdot;\xi(t))$
moving along the family $\{U^\varepsilon\}$ plus a perturbation term $v$.
The equation for the unknown $\xi=\xi(t)$ is chosen in such a way that the slower decaying terms
in the perturbation $v$ are canceled out.
In order to state a general result, we consider an approximation of the complete nonlinear equations
for the couple $(\xi, v)$, obtained by disregarding quadratic terms in $v$ and keeping the nonlinear
dependence on $\xi$,
in order to keep track of the nonlinear evolution along the manifold $\{U^\varepsilon\}$.
Such reduced system for $(v,\xi)$ is called {\sf quasi-linearized system} and it is the
concern of Theorem \ref{thm:metaL}, the main contribution of the paper.
Under appropriate assumptions on the manifold $U^\varepsilon$,
the linearized operators at such states, and the coupling between the two objects, 
such result gives an explicit representation for the solution to the evolutive problem 
together with an estimate on the remainder, vanishing in the limit $\varepsilon\to 0$.
This gives a sound justification to the reduced equation for the unknown $\xi=\xi(t)$ 
obtainable by neglecting also the linear term in $v$.

Dealing with the complete system for the couple $(v,\xi)$ brings into the analysis also the
specific form of the quadratic terms. 
As a consequence, in case of parabolic systems of reaction-diffusion type, we expect that
a results analogous to Theorem \ref{thm:metaL} could be proved, under the assumption
of an {\it a priori} $L^\infty$ bound on the solution.
Differently, when a nonlinear first order space derivative term is present (as is the
case for viscous conservation laws), the quadratic term involve a dependence on the
space derivative of the solution and a rigorous result needs an additional bound, which
we are not presently able to achieve.

In Section \ref{Sect:Appl} we consider the application of the general framework to 
the case of viscous scalar conservation laws. 
Firstly, we present the dynamics of the hyperbolic equation obtained in the vanishing
viscosity limit, proving a result on finite-time convergence to the one-parameter manifold
of steady states (Theorem \ref{thm:stabhyp}).
Then, we pass to consider the parabolic equation in \eqref{burgers} under assumption
\eqref{fluxhyp} and we build up a specific family $\{U^\varepsilon\}$ by matching continuously
stationary solutions at a given point $\xi$.
To apply the general result of Section \ref{Sect:Abs}, we need to measure how far are
states $U^\varepsilon$ from being stationary solutions, and this amounts in estimating
the jump of the space derivative at the matching point.
Such task is completed, showing that the residual has order $Ce^{-C/\varepsilon}$,
hence it is exponentially small in the limit $\varepsilon\to 0^+$.
As a by-product, we deduce a formal equation for the motion of the shock layer, which 
generalizes the one known for the case of the Burgers flux $f(s)=\frac12s^2$.

In Section \ref{Sect:Spec}, we analyze spectral properties of the diffusion-transport linear operator,
arising from the linearization at the state $U^\varepsilon(\cdot;\xi)$. 
We show that, under appropriate assumption on the limiting behavior of $U^\varepsilon$ as $\varepsilon\to 0^+$,
the spectrum can be decomposed into two parts: the first eigenvalue of order $O(e^{-C/\varepsilon})$;
all of the remaining eigenvalues are less than $-C/\varepsilon$
(where $C$ denotes a generic positive constant independent on $\varepsilon$).
Additionally, precise asymptotics for the first eigenvalue are achieved by considering the linear operator
with piecewise constant coefficient, obtained by taking the limit of functions $U^\varepsilon(\cdot;\xi)$
as $\varepsilon\to 0^+$.
This analysis is needed to give evidence of the validity of the coupling assumption required
in Theorem \ref{thm:metaL}.

\section{Metastable behavior for nonlinear parabolic systems}\label{Sect:Abs}

Given $\ell>0$, $I:=(-\ell,\ell)$ and $n\in\N$, 
we consider the space $X:=[L^2(I)]^n$  endowed with
\begin{equation*}
	\langle u, v\rangle:=\int_{-\ell}^{\ell} u(x)\cdot v(x)\,dx
	\qquad\qquad u,v\in X,
\end{equation*}
where $\cdot$ denotes the usual scalar product in $\R^n$.
Given $T>0$, we consider the evolutive Cauchy problem for the unknown $u\,:\,[0,T)\to X$
\begin{equation}\label{cauchy}
	\partial_t u={\mathcal F}^\varepsilon[u],\qquad	u\,\bigr|_{t=0}=u_0
\end{equation}
where ${\mathcal F}^\varepsilon$ denotes a nonlinear differential operator,
complemented with appropriate boundary conditions.
We are interested in describing the dynamical behavior of $u^{\varepsilon}$, solution to \eqref{cauchy},
in the regime $\varepsilon\sim 0$.
In particular, we have in mind the case of a singular dependence of ${\mathcal F}^\varepsilon$
with respect to $\varepsilon$, in the sense that the operator ${\mathcal F}^0$
is of lower order with respect to ${\mathcal F}^\varepsilon$.
The specific example, considered in detail in the subsequent Sections,
is the one-dimensional scalar viscous conservation laws with Dirichlet boundary conditions;
at the same time, also the usual Allen--Cahn parabolic equation fits into the framework.

Given a one-dimensional open interval $J$, let $\{U^{\varepsilon}(\cdot;\xi)\,:\,\xi\in J\}$ be a
one-parameter family in $X$, whose elements can be considered as approximate stationary
solutions to the problem in the sense that ${\mathcal F}^\varepsilon[U^{\varepsilon}(\cdot;\xi)]$
depends smoothly on $\varepsilon$ and tends to $0$ as $\varepsilon\to 0$. Precisely, we assume that the term ${\mathcal F}^\varepsilon[U^{\varepsilon}]$ belongs 
to the dual space of the continuous functions space $C(I)$ and there exists a family of smooth
positive  functions $\Omega^\varepsilon=\Omega^\varepsilon(\xi)$, uniformly convergent
to zero as $\varepsilon\to 0$,  such that, for any $\xi\in J$, there holds
\begin{equation}\label{defOmegaeps}
	|\langle \psi(\cdot),{\mathcal F}^\varepsilon[U^{\varepsilon}(\cdot,\xi)]\rangle|
		\leq \Omega^\varepsilon(\xi)\,|\psi|_{{}_{\infty}} 
		\qquad \forall\,\psi\in C(I).
\end{equation}
The family $\{U^{\varepsilon}(\cdot;\xi)\}$ will be referred to as an {\sf approximate invariant manifold}
with respect to the flow determined by \eqref{cauchy} in $X$.
Generically, since an element $U^{\varepsilon}(\cdot;\xi)$ is not a steady state for \eqref{cauchy},
the dynamics walk away from the manifold with a speed dictated by $\Omega^\varepsilon$.
The dependence of $\Omega^\varepsilon$ on $\varepsilon$ plays a relevant r\^ole,
since it drives the departure from the approximate  invariant manifold. 

Next, we decompose the solution to the initial value problem \eqref{cauchy} as 
\begin{equation*}
	u(\cdot,t)=U^{\varepsilon}(\cdot;\xi(t))+v(\cdot,t)
\end{equation*} 
with $\xi=\xi(t)\in J$  and $v=v(\cdot,t)\in [L^2(I)]^n$ to be determined.
Substituting, we obtain
\begin{equation}\label{eqv}
	\partial_t v=\mathcal{L}^\varepsilon_\xi v+{\mathcal F}^\varepsilon[U^{\varepsilon}(\cdot;\xi)]
		-\partial_{\xi}U^{\varepsilon}(\cdot;\xi)\,\frac{d\xi}{dt}+\mathcal{Q}^\varepsilon[v,\xi]
\end{equation}
where
\begin{equation*}
	\begin{aligned}
		\mathcal{L}^\varepsilon_\xi v&:=d{\mathcal F}^\varepsilon[U^{\varepsilon}(\cdot;\xi)]\,v\\
		\mathcal{Q}^\varepsilon[v,\xi]&:={\mathcal F}^\varepsilon[U^{\varepsilon}(\cdot;\xi)+v]
			-{\mathcal F}^\varepsilon[U^{\varepsilon}(\cdot;\xi)]
				-d{\mathcal F}^\varepsilon[U^{\varepsilon}(\cdot;\xi)]\,v.
	\end{aligned} 
\end{equation*}
Next, we assume that the linear operator $\mathcal{L}^\varepsilon_\xi$ has a discrete spectrum
composed by semi-simple eigenvalues $\lambda^\varepsilon_k=\lambda^\varepsilon_k(\xi)$
with corresponding right eigenfunctions $\phi^\varepsilon_k=\phi^\varepsilon_k(\cdot;\xi)$.
Denoting by $\psi^\varepsilon_k=\psi^\varepsilon_k(\cdot;\xi)$ the eigenfunctions of the adjoint operator 
$\mathcal{L}^{\varepsilon,\ast}_\xi$ and setting 
\begin{equation*}
	v_k=v_k(\xi;t):=\langle \psi^\varepsilon_k(\cdot;\xi),v(\cdot,t)\rangle, 
\end{equation*}
we can use the degree of freedom we still have in the choice of the couple $(v,\xi)$ in such a way that 
the component $v_1$  is identically zero, that is
\begin{equation*}
	\frac{d}{dt} \langle \psi^\varepsilon_1(\cdot;\xi(t)), v(\cdot,t) \rangle =0
	\qquad\textrm{and}\qquad
	\langle \psi^\varepsilon_1(\cdot;\xi_0), v_0(\cdot))\rangle=0.
\end{equation*}
Using equation \eqref{eqv}, we infer
\begin{equation*}
	\langle \psi^\varepsilon_1(\xi,\cdot),\mathcal{L}^\varepsilon_\xi v+ {\mathcal F}[U^{\varepsilon}(\cdot;\xi)]
		- \partial_{\xi}U^{\varepsilon}(\cdot;\xi)\frac{d\xi}{dt}+{\mathcal Q}^\varepsilon[v,\xi] \rangle 
		+ \langle  \partial_{\xi}\psi^\varepsilon_1(\xi,\cdot) \frac{d\xi}{dt},v \rangle =0
\end{equation*}
Since $\langle \psi^\varepsilon_1, {\mathcal L}_{\xi}v \rangle= \lambda_1\langle \psi^\varepsilon_1, v \rangle$, 
we obtain a scalar differential equation for the variable $\xi$, describing the reduced
dynamics along the approximate manifold, that is
\begin{equation}\label{eqxi0}
		\alpha^\varepsilon(\xi,v)\frac{d\xi}{dt}=\langle \psi^\varepsilon_1(\cdot;\xi),
			{\mathcal F}[U^{\varepsilon}(\cdot;\xi)]+\mathcal{Q}^\varepsilon[v,\xi] \rangle
\end{equation}
where
\begin{equation*}	
	\alpha^\varepsilon_0(\xi)
			:= \langle \psi^\varepsilon_1(\cdot;\xi), \partial_{\xi}U^{\varepsilon}(\cdot;\xi) \rangle
	\quad\textrm{and}\quad
	\alpha^\varepsilon(\xi,v)
			:= \alpha^\varepsilon_0(\xi) - \langle  \partial_{\xi}\psi^\varepsilon_1(\cdot;\xi),v \rangle,
\end{equation*}
together with the condition on the initial datum $\xi_0$
\begin{equation*}
	\langle \psi^\varepsilon_1(\cdot;\xi_0), v_0(\cdot) \rangle =0
\end{equation*}
To rewrite equation \eqref{eqxi0} in normal form in the regime of small $v$, 
we assume
\begin{equation*}
	|\alpha^\varepsilon_0(\xi)|
	=|\langle \psi^\varepsilon_1(\cdot;\xi), \partial_{\xi}U^{\varepsilon}(\cdot;\xi) \rangle|
	\geq c_0>0
\end{equation*}
for some $c_0>0$ independent on $\xi$.
Such assumption gives a (weak) restriction on the choice of the members of the family
$\{U^{\varepsilon}\}$ asking for the manifold to be never transversal to the first eigenfunction
of the corresponding linearized operator.
From now on, we can renormalize the eigenfunction 
$\psi^\varepsilon_1$ so that
\begin{equation*}
	\alpha^\varepsilon_0(\xi)
	=\langle \psi^\varepsilon_1(\cdot;\xi), \partial_{\xi}U^{\varepsilon}(\cdot;\xi)\rangle=1,
\end{equation*}
for any $\varepsilon>0$ and for any $\xi\in J$.
In the regime $v\to 0$, we may expand $1/\alpha^\varepsilon$ as
\begin{equation*}
	\frac{1}{\alpha^\varepsilon(\xi,v)} = \frac{1}{\alpha^\varepsilon_0(\xi)}\left(1
		+\frac{\langle \partial_{\xi} \psi^\varepsilon_1,v \rangle}{\alpha^\varepsilon_0(\xi)}\right) + o(|v|)
		= 1+\langle \partial_{\xi} \psi^\varepsilon_1,v \rangle+ o(|v|).
\end{equation*}
Inserting in \eqref{eqxi0}, we mat rewrite the nonlinear equation for $\xi$ as
\begin{equation}\label{eqxiNL}
	\frac{d\xi}{dt}=\theta^\varepsilon(\xi)\bigl(1+\langle\partial_{\xi} \psi^\varepsilon_1, v \rangle\bigr)
		+ \rho^\varepsilon[\xi,v], 
\end{equation}
where
\begin{equation*}
	\begin{aligned}
 	\theta^\varepsilon(\xi)
		&:=\langle \psi^\varepsilon_1,{\mathcal F[U^{\varepsilon}] \rangle}\\
	\rho^\varepsilon[\xi,v]&:=\frac{1}{\alpha^\varepsilon(\xi,v)}
		\bigl(\langle \psi^\varepsilon_1,\mathcal{Q}^\varepsilon\rangle
		+\langle \partial_\xi\psi^\varepsilon_1, v\rangle^2\bigr).
	\end{aligned}
\end{equation*}
Using \eqref{eqxiNL}, equation  \eqref{eqv} can be rephrased as
\begin{equation}\label{eqvNL}
	\partial_t v= H^\varepsilon(x;\xi)
		+ ({\mathcal L}^\varepsilon_\xi+{\mathcal M}^\varepsilon_\xi)v
			+\mathcal{R}^\varepsilon[v,\xi]
\end{equation}
where 
\begin{align*}
		H^\varepsilon(\cdot;\xi)&:={\mathcal F}^\varepsilon[U^{\varepsilon}(\cdot;\xi)]
			-\partial_{\xi}U^{\varepsilon}(\cdot;\xi)\,\theta^\varepsilon(\xi),\\
		{\mathcal M}^\varepsilon_\xi v&:=-\partial_{\xi}U^{\varepsilon}(\cdot;\xi)
			\,\theta^\varepsilon(\xi)\,\langle\partial_{\xi} \psi^\varepsilon_1, v \rangle,\\ 
		\mathcal{R}^\varepsilon[v,\xi]&:=\mathcal{Q}^\varepsilon[v,\xi]
								-\partial_{\xi}U^{\varepsilon}(\cdot;\xi)\,\rho^\varepsilon[\xi,v].
\end{align*}
Let us stress that, by definition, there holds
\begin{equation*}
	\langle \psi^\varepsilon_1(\cdot;\xi), H^\varepsilon(\cdot;\xi)\rangle=0,
\end{equation*}
so that $H^\varepsilon(\cdot;\xi)$ is the projection of ${\mathcal F}^\varepsilon[U^{\varepsilon}(\cdot;\xi)]$
onto the space orthogonal to $\phi^\varepsilon_1(\cdot;\xi)$.

Summarizing, the couple $(v,\xi)$ solves the differential system \eqref{eqxiNL}-\eqref{eqvNL}
where the initial condition $\xi_0$ for $\xi$ is such that
\begin{equation*}
	\langle \psi^\varepsilon_1(\cdot;\xi_0), u_0-U(\cdot;\xi_0)\rangle =0
\end{equation*}
and the initial condition $v_0$ for $v$ is given by $u_0-U(\cdot;\xi_0)$.

Neglecting the $o(v)$ order terms, we obtain the system
\begin{equation}\label{LS}
 	\left\{\begin{aligned}
	\frac{d\zeta}{dt}&=\theta^\varepsilon(\zeta)\bigl(1 
		+\langle\partial_{\zeta} \psi^\varepsilon_1, w \rangle\bigr), \\
	\partial_t w &= H^\varepsilon(\zeta)+ ({\mathcal L}^\varepsilon_\zeta+{\mathcal M}^\varepsilon_\zeta)w
 	\end{aligned}\right. 
\end{equation}
with initial conditions
\begin{equation}\label{initialLS}
	\zeta(0)=\zeta_0\in(-\ell,\ell) \qquad\textrm{and}\qquad w(x,0)=w_0(x)\in X.
\end{equation}
From now on, we will refer to this system as the {\sf quasi-linearization} of \eqref{eqxiNL}--\eqref{eqvNL}.
Our aim is to describe the behavior of the solution to \eqref{LS} in the regime
of small $\varepsilon$.

Shortly, the quasi-linearized system is determined by an appropriate combination of the 
term ${\mathcal F}^\varepsilon[U^{\varepsilon}]$, measuring how far is the function $U^\varepsilon$
from being a stationary solution, and the linear operator $\mathcal L_\xi^\varepsilon$, controlling
at first order how  solutions to \eqref{cauchy} depart from $U^\varepsilon$ when the latter is taken
as initial datum.
To state our first result, we need to precise the assumption on such terms.
\vskip.15cm

{\bf H1.} The family $\{U^{\varepsilon}(\cdot,\xi)\}$ is such that ${\mathcal F}^\varepsilon[U^{\varepsilon}]$
belongs to the dual space of $C(I)^n$ and there exists functions $\Omega^\varepsilon$ such that,
denoting again with $\langle \cdot ,\cdot\rangle$ the duality relation, 
\begin{equation*}
	|\langle \psi(\cdot),{\mathcal F}^\varepsilon[U^{\varepsilon}(\cdot,\xi)]\rangle|
		\leq \Omega^\varepsilon(\xi)\,|\psi|_{{}_{\infty}} \qquad \forall\,\psi\in C(I).
\end{equation*}
with $\Omega^\varepsilon$ converging to zero as $\varepsilon\to 0$, uniformly with respect to $\xi\in J$.
\vskip.15cm

{\bf H2.} The eigenvalues $\{\lambda^\varepsilon_k(\xi)\}_{{}_{k\in\N}}$ of $\mathcal{L}^\varepsilon_\xi$
are semi-simple, $\lambda_1(\xi)$ is simple, real and negative, and
\begin{equation*}
	\textrm{\rm Re}\,\lambda^\varepsilon_k(\xi)\leq \min\{ \lambda^\varepsilon_1(\xi)-C,-C\,k^2\}
	\quad \textrm{for }k\geq 2.
\end{equation*}
for some constant $C>0$ independent on $k\in\N$, $\varepsilon>0$ and $\xi\in J$.
\vskip.15cm

{\bf H3.} The eigenfunctions $\phi^\varepsilon_k(\cdot;\xi)$ and $\psi^\varepsilon_k(\cdot;\xi)$
of $\mathcal{L}^\varepsilon_{\xi}$ and $\mathcal{L}^{\varepsilon,\ast}_{\xi}$
normalized so that
\begin{equation*}
	\langle \psi^\varepsilon_1(\cdot;\xi), \partial_{\xi}U^{\varepsilon}(\cdot;\xi)\rangle=1
	\qquad\textrm{and}\qquad
	\langle \psi^\varepsilon_j, \phi^\varepsilon_k\rangle=\delta_{jk}.
\end{equation*}
where $\delta_{jk}$ is the usual Kronecker symbol, are such that
\begin{equation}\label{derpsiphi}
	\sum_{j} \langle \partial_\xi \psi^\varepsilon_k, \phi^\varepsilon_j\rangle^2
	=\sum_{j} \langle \psi^\varepsilon_k, \partial_\xi \phi^\varepsilon_j\rangle^2
	\leq C
	\qquad\qquad\forall\,k.
\end{equation}
for some constant $C$ independent on $\varepsilon>0$ and $\xi\in J$.

The last assumption we require relate the term $\Omega^\varepsilon(\xi)$ to the first eigenvalue
$\lambda_1^\varepsilon(\xi)$ of the  linearized operator $\mathcal{L}_\xi^\varepsilon$
at $U^\varepsilon(\cdot;\xi)$.
Formally, if $U^\varepsilon(\cdot;\bar\xi)$ is an exact stationary solution, then
\begin{equation*}
	\mathcal{F}[U^\varepsilon(\cdot;\xi)]
	=\mathcal{F}[U^\varepsilon(\cdot;\xi)]-\mathcal{F}[U^\varepsilon(\cdot;\bar\xi)]
	\approx\mathcal{L}_\xi^\varepsilon\partial_\xi U^\varepsilon(\cdot;\bar\xi)(\bar\xi-\xi).
\end{equation*}
If $\partial_\xi U^\varepsilon$ is chosen to be approximately close to the 
first eigenfunction of $\mathcal{L}_\xi^\varepsilon$, then
\begin{equation*}
	\langle \psi(\cdot),\mathcal{F}[U^\varepsilon(\cdot;\xi)]\rangle 
	=\mathcal{F}[U^\varepsilon(\cdot;\xi)]-\mathcal{F}[U^\varepsilon(\cdot;\bar\xi)]
	\approx \lambda_1^\varepsilon(\xi)\langle \psi(\cdot),\partial_\xi U^\varepsilon(\cdot;\bar\xi)\rangle (\bar\xi-\xi),
\end{equation*}
so that, heuristically, there exists a constant $C>0$ such that
\begin{equation*}
	|\langle \psi(\cdot),\mathcal{F}[U^\varepsilon(\cdot;\xi)]\rangle|
	\leq C|\lambda_1^\varepsilon(\xi)||\psi|_{{}_{\infty}}
\end{equation*}
which gives the final form of our ultimate assumption.

\begin{theorem}\label{thm:metaL}
Let hypotheses {\bf H1-2-3} be satisfied.
Additionally, assume that 
\begin{equation}\label{meta}
	\Omega^\varepsilon(\xi)\leq C|\lambda_1^\varepsilon(\xi)|
\end{equation}
for some constant $C>0$ independent on $\varepsilon>0$ and $\xi\in J$.

Then, denoted by $(\zeta,w)$ the solution to the initial-value problem \eqref{LS}--\eqref{initialLS},
for any $\varepsilon$ sufficiently small, there exists a time $T^\varepsilon$ 
such that for any $t\leq T^\varepsilon$ the solution $w$ is given by
\begin{equation*}
 w=z+R
\end{equation*}
where $z$ is defined by
\begin{equation*}
	z(x,t):=\sum_{k\geq 2} w_k(0)\exp\left(\int_0^t \lambda^\varepsilon_k(\zeta(\sigma))\,d\sigma\right)
		\,\phi^\varepsilon_k(x;\zeta(t)),
\end{equation*}
and the remainder $R$ satisfies the estimate
\begin{equation}\label{boundw}
	|R|_{{}_{L^2}}\,\leq C\,|\Omega^\varepsilon|_{{}_{\infty}}
		\left\{\exp\left(2\int_0^t\lambda_1^\varepsilon(\zeta(\sigma))d\sigma \right)
			|w_0|_{{}_{L^2}}^2+1\right\}
\end{equation}
for some constant $C>0$ independent on $\varepsilon, T>0$.

Moreover, for initial data $w_0$ sufficiently small in $L^2$, the final time $T^\varepsilon$
can be chosen with order $\left|\ln |\Omega^\varepsilon|_{{}_{\infty}}\right|/|\Omega^\varepsilon|_{{}_{\infty}}$.
\end{theorem}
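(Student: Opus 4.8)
The plan is to work in the moving eigenbasis $\{\phi_k^\varepsilon(\cdot;\zeta(t))\}$ and derive a decoupled scalar ODE for each coordinate $w_k(t):=\langle\psi_k^\varepsilon(\cdot;\zeta(t)),w(\cdot,t)\rangle$, $k\geq 2$. Differentiating $w_k$ in time and using the second equation of \eqref{LS}, one gets
\begin{equation*}
	\frac{dw_k}{dt}=\lambda_k^\varepsilon(\zeta)\,w_k
		+\langle\psi_k^\varepsilon,H^\varepsilon(\zeta)\rangle
		+\langle\psi_k^\varepsilon,{\mathcal M}_\zeta^\varepsilon w\rangle
		+\langle\partial_\zeta\psi_k^\varepsilon,w\rangle\,\frac{d\zeta}{dt}.
\end{equation*}
The key observations are: by {\bf H1} and \eqref{meta}, $|\langle\psi_k^\varepsilon,H^\varepsilon\rangle|\leq C\Omega^\varepsilon\leq C|\lambda_1^\varepsilon|$; the term ${\mathcal M}_\zeta^\varepsilon w$ carries a factor $\theta^\varepsilon(\zeta)=\langle\psi_1^\varepsilon,{\mathcal F}^\varepsilon[U^\varepsilon]\rangle$ which is again $O(\Omega^\varepsilon)$; and $d\zeta/dt=\theta^\varepsilon(\zeta)(1+\langle\partial_\zeta\psi_1^\varepsilon,w\rangle)$ is likewise $O(\Omega^\varepsilon)$ as long as $w$ stays bounded. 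So each $w_k$ satisfies a linear ODE with leading coefficient $\lambda_k^\varepsilon(\zeta)$ and three forcing/coupling terms each of size $O(\Omega^\varepsilon)$ times a bounded quantity (using {\bf H3} to control the $\langle\partial_\zeta\psi_k^\varepsilon,\phi_j^\varepsilon\rangle$ couplings through Bessel-type inequalities $\sum_k|w_k|^2\sim|w|_{L^2}^2$).

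Next I would integrate: writing $E_k(t):=\exp\big(\int_0^t\lambda_k^\varepsilon(\zeta(\sigma))\,d\sigma\big)$, Duhamel gives
\begin{equation*}
	w_k(t)=E_k(t)\,w_k(0)+\int_0^t \frac{E_k(t)}{E_k(\sigma)}\,G_k(\sigma)\,d\sigma,
\end{equation*}
where $G_k$ collects the $O(\Omega^\varepsilon)$ terms. The definition of $z$ is precisely $\sum_{k\geq2}E_k(t)w_k(0)\phi_k^\varepsilon(\cdot;\zeta(t))$, so $R=w-z=\sum_{k\geq2}\big(\int_0^t E_k(t)E_k(\sigma)^{-1}G_k(\sigma)d\sigma\big)\phi_k^\varepsilon$. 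Using {\bf H2}, $E_k(t)E_k(\sigma)^{-1}\leq e^{-C k^2(t-\sigma)}$ for $k\geq2$, so the convolution against $|G_k(\sigma)|$ is controlled by $C\,\|G_k\|_{L^\infty_t}/k^2$; summing the squares in $k$ converges, and one factor of $\Omega^\varepsilon$ comes out uniformly, yielding the "$+1$" part of the bound \eqref{boundw}. The term proportional to $\exp\big(2\int_0^t\lambda_1^\varepsilon\big)|w_0|_{L^2}^2$ in \eqref{boundw} arises from the contributions in $G_k$ that are themselves linear in $w$ (namely ${\mathcal M}_\zeta^\varepsilon w$ and the $\langle\partial_\zeta\psi_k^\varepsilon,w\rangle\,d\zeta/dt$ piece): one feeds in the a priori bound $|w(t)|_{L^2}\leq C(\,|E_1(t)|\,|w_0|_{L^2}+\Omega^\varepsilon\,)$ obtained from a first, cruder Gronwall pass, and the quadratic $|w_0|^2$ with the $E_1^2$ weight is what survives.

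The whole argument is a continuation/bootstrap on an interval $[0,T^\varepsilon]$: one posits $|w(t)|_{L^2}\leq M$ for a suitable $M$ (say $M=1$, or $M$ comparable to $|w_0|_{L^2}+1$), runs the estimates above, and checks that the resulting bound on $|w|_{L^2}$ is strictly better than $M$ as long as $t$ does not exceed the stated time. The drift $\zeta(t)$ moves at speed $O(\Omega^\varepsilon)$, so it remains in $J$ for times up to $O(1/\Omega^\varepsilon)$, and the forcing contributes $O(\Omega^\varepsilon\cdot t)$ to $|w|_{L^2}$, which stays $\lesssim1$ until $t\sim1/\Omega^\varepsilon$; the logarithmic correction $|\ln|\Omega^\varepsilon||/|\Omega^\varepsilon|$ for small $w_0$ comes from the time needed for the transient $E_1(t)|w_0|_{L^2}$ to decay below the forcing level $\Omega^\varepsilon$, i.e. $|\lambda_1^\varepsilon|\,t\sim|\ln|\Omega^\varepsilon||$ combined with $|\lambda_1^\varepsilon|\gtrsim$ (a quantity comparable to) $\Omega^\varepsilon$ via \eqref{meta}.

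\medskip
\noindent\textbf{Main obstacle.} The delicate point is handling the $\zeta$-dependence of the eigenfunctions rigorously: differentiating $w_k=\langle\psi_k^\varepsilon(\cdot;\zeta(t)),w\rangle$ produces the moving-frame terms $\langle\partial_\zeta\psi_k^\varepsilon,w\rangle\,\dot\zeta$, and to close the sum over $k$ one must control $\sum_k\langle\partial_\zeta\psi_k^\varepsilon,\phi_j^\varepsilon\rangle^2$ uniformly in $\varepsilon$ and $\zeta$ — exactly what {\bf H3} postulates. One has to be careful that these couplings do not reintroduce the slow mode $w_1$ (which is identically zero by construction, so in fact only $k,j\geq2$ cross-terms plus the $k\geq2$, $j=1$ terms enter, and the latter are absorbed into the forcing since $w_1\equiv0$) and that the series manipulations (interchanging $\sum_k$ with $\int_0^t$, applying Cauchy–Schwarz in the right order) are justified by the summability coming from the $k^2$ spectral gap in {\bf H2}. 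Getting the precise form of \eqref{boundw} — in particular the quadratic dependence $|w_0|_{L^2}^2$ under the $E_1^2$ weight rather than a linear one — requires tracking which forcing terms are genuinely linear in $w$ versus constant, and this bookkeeping, together with the continuation argument that pins down $T^\varepsilon$, is where the real work lies.
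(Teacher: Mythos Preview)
Your overall architecture---expand in the moving eigenbasis, derive scalar ODEs for the $w_k$, apply Duhamel, identify $z$ as the homogeneous piece, and run a continuation argument---matches the paper's. But there are two concrete errors that would prevent the argument from closing.

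First, $w_1$ is \emph{not} identically zero in the quasi-linearized system. The original nonlinear system was engineered so that $v_1\equiv 0$, but after passing to \eqref{LS} and expanding in the $\zeta(t)$-dependent basis, the equation for $w_1$ reads
\[
\frac{dw_1}{dt}=\lambda_1^\varepsilon(\zeta)\,w_1
-\theta^\varepsilon(\zeta)\sum_{j,\ell}\langle\psi_1^\varepsilon,\partial_\xi\phi_j^\varepsilon\rangle
\langle\partial_\xi\psi_1^\varepsilon,\phi_\ell^\varepsilon\rangle\,w_j w_\ell,
\]
with $w_1(0)=0$. The constant and linear forcing terms do cancel (because $\langle\psi_1^\varepsilon,H^\varepsilon\rangle=0$ and $\langle\psi_1^\varepsilon,a_j\rangle=0$ after the normalization $\langle\psi_1^\varepsilon,\partial_\xi U^\varepsilon\rangle=1$), but a genuinely \emph{quadratic} forcing survives, and $w_1$ must be estimated, not set to zero.

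Second, and relatedly, the forcing $G_k$ for general $k$ is not merely $O(\Omega^\varepsilon)$ times a constant or a linear-in-$w$ term. The moving-frame contribution $\langle\partial_\zeta\psi_k^\varepsilon,w\rangle\,\dot\zeta$ combined with $\dot\zeta=\theta^\varepsilon(1+\langle\partial_\zeta\psi_1^\varepsilon,w\rangle)$ produces the term $\theta^\varepsilon\langle\partial_\zeta\psi_k^\varepsilon,w\rangle\langle\partial_\zeta\psi_1^\varepsilon,w\rangle$, which is $O(\Omega^\varepsilon)\,|w|_{L^2}^2$. This is the actual source of the factor $|w_0|_{L^2}^2$ in \eqref{boundw}; it is not obtained by feeding a first linear bound into a linear forcing as you suggest. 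Because the forcing is quadratic in the unknown $|w-z|_{L^2}$, a Gronwall pass will not close: one is led to an inequality of the form $N(t)\leq A\,N(t)^2+B$ for $N(t):=\sup_{s\leq t}E_1(s,0)|w-z|_{L^2}(s)$, and the continuation condition is $4AB<1$. The paper isolates this step as a separate lemma; it is precisely this quadratic continuation (not a linear bootstrap) that determines the time scale $T^\varepsilon\sim|\ln|\Omega^\varepsilon|_\infty|/|\Omega^\varepsilon|_\infty$, via the constraint that $B$, which contains the growing factor $E_1(t,0)$, stay below $C/|\Omega^\varepsilon|_\infty$.
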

\vskip.15cm

The conclusion of the proof of Theorem \ref{thm:metaL} is based on the following 
version of a standard nonlinear iteration argument.
\vskip.15cm

\begin{lemma}\label{lem:nquadro}
Let $f=f(t), g=g(t)$ and $h=h(s,t)$ be continuous functions for $t\in[0,T]$
for some $T>0$, such that
\begin{equation*}
	f(t)\geq 0,\quad g(t)>0,\quad g\textrm{ decreasing},\quad h(s,t)\geq 0.
\end{equation*}
Let $y=y(t)$ be a non-negative function satisfying the estimate
\begin{equation*}
	y(t)\leq \int_0^t \left\{f(s)\,g(t)\,y^2(s)+h(s,t)\right\}\,ds
\end{equation*}
for any $t\leq T$. 
If there holds
\begin{equation}\label{AB14}
    \sup_{t\in[0,T]} \int_0^t g^2(s)\,f(s)\,ds  \,\cdot\,
    \sup_{t\in[0,T]} \frac{1}{g(t)}\,\int_0^t h(s,t)\,ds
    < \frac14
\end{equation}
for any $t\in[0,T]$, then
\begin{equation*}
	y(t)\leq 2\,\sup_{\tau\in[0,t]} \int_0^\tau h(s,\tau)\,ds
\end{equation*}
for any $t\in[0,T]$.
\end{lemma}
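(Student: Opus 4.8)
The plan is to set up a continuity (bootstrap) argument on the quantity $Y(t):=\sup_{\tau\in[0,t]}y(\tau)$, using the monotonicity of $g$ to extract the quadratic term from under the integral sign. First I would define $M(t):=\sup_{\tau\in[0,t]}\int_0^\tau h(s,\tau)\,ds$ and observe that, since $g$ is decreasing, for every $s\le t$ one has $g(t)\le g(s)$, hence $f(s)g(t)y^2(s)\le f(s)g(s)\,g(s)^{-1}\cdot g(t)\,y^2(s)$ — more usefully, I would bound the assumed inequality by pulling $y^2$ out as a supremum. Concretely, from
\[
	y(t)\le \int_0^t f(s)\,g(t)\,y^2(s)\,ds + \int_0^t h(s,t)\,ds
\]
and $y^2(s)\le Y^2(t)$ for $s\le t$, together with $g(t)\le g(s)$, I get $y(t)\le Y^2(t)\int_0^t f(s)g(s)\,ds + M(t)$. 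This still has the wrong power of $g$; the correct manipulation is to write $g(t)=g(t)^2/g(t)$ and use $g(t)^2\le g(s)^2$ only after also dividing by $g$ in the $h$-term, which is why the hypothesis \eqref{AB14} is stated with $g^2 f$ in one factor and $g^{-1}\int h$ in the other. So instead I would divide the whole inequality by $g(t)$: since $g(t)\le g(s)$,
\[
	\frac{y(t)}{g(t)}\le \int_0^t f(s)\,y^2(s)\,ds + \frac{1}{g(t)}\int_0^t h(s,t)\,ds
	\le \int_0^t f(s)\,g(s)^2\,\Bigl(\frac{y(s)}{g(s)}\Bigr)^2 ds + \frac{M(t)}{g(t)}.
\]
Setting $z(t):=y(t)/g(t)$ and $Z(t):=\sup_{[0,t]}z$, this reads $z(t)\le A\,Z(t)^2 + B$ where $A:=\sup_t\int_0^t f(s)g(s)^2 ds$ and $B:=\sup_t g(t)^{-1}\int_0^t h(s,t)\,ds$, and \eqref{AB14} is exactly $AB<\tfrac14$.

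The core of the argument is then the elementary scalar fact: if a continuous nonnegative function $z$ with $z(0)=0$ satisfies $z(t)\le A\,Z(t)^2+B$ for all $t$, and $4AB<1$, then $Z(t)\le 2B$ for all $t$. I would prove this by contradiction/continuity: the set $\{t: Z(t)\le 2B\}$ is closed, nonempty (contains $0$, where $z(0)=0\le 2B$), and I claim it is open in $[0,T]$. Indeed, on that set $z(t)\le A(2B)^2 + B = 4AB^2 + B < B + B = 2B$ (strictly, using $4AB<1$), and since $z$ is continuous this strict inequality persists on a neighborhood, so $Z$ stays $\le 2B$ slightly beyond. Hence the set is all of $[0,T]$, giving $Z(t)\le 2B$, i.e. $y(t)\le 2B\,g(t)$. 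Unwinding, $y(t)\le 2\,g(t)\cdot g(t)^{-1}\int_0^t h(s,t)\,ds$? That is not quite the claimed bound either; the claimed conclusion is $y(t)\le 2\sup_{\tau\le t}\int_0^\tau h(s,\tau)\,ds$, so at the final step I would instead carry $M(t)=\sup_{\tau\le t}\int_0^\tau h\,ds$ rather than $B$ through the bootstrap on $Y(t)=\sup y$ directly (not on $z$), using $g(t)\le g(s)$ only to discard the $g(t)$ factor multiplying the quadratic term after it has been paired against one $g(s)$ from $f g^2$ — i.e. run the same continuity argument with the target inequality $Y(t)\le 2M(t)$ and threshold governed by $\sup\int g^2 f\cdot \sup g^{-1}\int h$.

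The main obstacle — really the only subtle point — is getting the powers of $g$ to match so that the two suprema in \eqref{AB14} appear naturally, and making sure the quadratic term is genuinely controlled by a \emph{supremum} of $y$ (not $y$ pointwise), which is what makes the continuity argument close; the monotonicity of $g$ and the nonnegativity/continuity hypotheses are used precisely for that bookkeeping, and no ODE comparison or Gronwall inequality is actually needed beyond the trivial scalar quadratic estimate above.
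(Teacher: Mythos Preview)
Your approach is exactly the paper's: define $w(t)=y(t)/g(t)$, derive the quadratic inequality $w(t)\le \int_0^t f(s)g^2(s)\,w^2(s)\,ds + g(t)^{-1}\int_0^t h(s,t)\,ds$, set $N(t)=\sup_{[0,t]}w$, and use the elementary bootstrap ($N(0)=0$, $N\le A N^2+B$, $4AB<1 \Rightarrow N\le 2B$). So the substance is correct and matches the paper.

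Your only stumble is at the very end, and it is unnecessary. Once you have $N(t)\le 2\widetilde B(t)$ with the \emph{time-local} quantity $\widetilde B(t):=\sup_{\tau\in[0,t]} g(\tau)^{-1}\int_0^\tau h(s,\tau)\,ds$ (the bootstrap works just as well with $\widetilde B(t)$ in place of the global $B$, since $A(t)\widetilde B(t)\le AB<1/4$), you simply write
\[
y(t)=g(t)\,w(t)\le 2\,g(t)\,\widetilde B(t)
=2\sup_{\tau\in[0,t]}\frac{g(t)}{g(\tau)}\int_0^\tau h(s,\tau)\,ds
\le 2\sup_{\tau\in[0,t]}\int_0^\tau h(s,\tau)\,ds,
\]
using once more that $g$ is decreasing so $g(t)/g(\tau)\le 1$ for $\tau\le t$. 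This is precisely the paper's ``final estimate follows from the monotonicity of $g$''. There is no need to revert to a bootstrap on $Y(t)=\sup y$; as you already observed, that route does not produce the $g^2 f$ factor in the hypothesis.
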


\begin{proof}[Proof of Lemma \ref{lem:nquadro}]
The auxiliary function $w(t):=g^{-1}(t)\,y(t)$ enjoyes the estimate
\begin{equation*}
	w(t)\leq \int_0^t \left\{\alpha(s)\,w^2(s)+\beta(s,t)\right\}\,ds
\end{equation*}
where $\alpha(t):=f(t)\,g^2(t)$ and $\beta(s,t)=g^{-1}(t)\,h(s,t)$.
The quantity 
\begin{equation*}
	N(t):=\sup_{\tau\in[0,t]} w(\tau).
\end{equation*}
is such that for any $t\in[0,T]$ there holds
\begin{equation*}
	N(t)\leq A\,N^2(t)+B
\end{equation*}
where 
\begin{equation*}
	A=A(T):=\sup_{t\in[0,T]} \int_0^t \alpha(s)\,ds,\qquad
	B=B(T):=\sup_{t\in[0,T]} \int_0^t \beta(s,t)\,ds.
\end{equation*}
Since $N(0)=0$, if $1-4AB>0$, then
\begin{equation*}
	N<\frac{1-\sqrt{1-4AB}}{2A}=\frac{2B}{1+\sqrt{1-4AB}}\leq 2B.
\end{equation*}
In term of $y$, if \eqref{AB14} holds,  then
\begin{equation*}
 y(t)<2\,g(t)\,\sup_{\tau\in[0,T]} \frac{1}{g(\tau)}\int_0^\tau h(s,\tau)\,ds.
\end{equation*}
The final estimate follows from the monotonicity of the function $g$.
\end{proof}

\begin{proof}[Proof of Theorem \ref{thm:metaL}]
Setting
\begin{equation*}
	w(x,t)=\sum_{j} w_j(t)\,\phi^\varepsilon_j(x,\zeta(t)),
\end{equation*}
we obtain an infinite-dimensional differential system for the coefficients $w_j$
\begin{equation}\label{eqwk_bis}
	\frac{dw_k}{dt}=\lambda^\varepsilon_k(\zeta)\,w_k
		+\langle \psi^\varepsilon_k,F\rangle
\end{equation}
where, omitting the dependencies for shortness,
\begin{equation*}
	F:=H^\varepsilon+\sum_{j} w_j\,\Bigl\{{\mathcal M}^\varepsilon_\zeta\, \phi^\varepsilon_j
			-\partial_\xi \phi^\varepsilon_j\,\frac{d\zeta}{dt}\Bigr\}
		=H^\varepsilon-\theta^\varepsilon \sum_{j}\Bigl(a_j+\sum_{\ell} b_{j\ell}\,w_\ell\Bigr)w_j.
\end{equation*}
and the coefficients $a_j$, $b_{jk}$ are given by
\begin{equation*}
	a_j:=\langle \partial_{\xi} \psi^\varepsilon_1, \phi^\varepsilon_j\rangle\,
			\partial_{\xi}U^{\varepsilon}+\partial_\xi \phi^\varepsilon_j,
	\qquad
	b_{j\ell}:=\langle \partial_{\xi} \psi^\varepsilon_1, \phi^\varepsilon_\ell\rangle
			\,\partial_\xi \phi^\varepsilon_j
\end{equation*}
Convergence of the series is guaranteed by assumption \eqref{derpsiphi}.

Differentiating the normalization condition on the eigenfunction, we infer
\begin{equation*}
	\langle \partial_\xi \psi^\varepsilon_j, \phi^\varepsilon_k\rangle
	+\langle \psi^\varepsilon_j,  \partial_\xi \phi^\varepsilon_k\rangle=0.
\end{equation*}
Thus,  for the coefficients $a_j$ there hold
\begin{equation*}
	\langle \psi^\varepsilon_k, a_j\rangle 
		=\langle \partial_{\xi} \psi^\varepsilon_1, \phi^\varepsilon_j\rangle\,
			\bigl(\langle \psi^\varepsilon_k, \partial_{\xi}U^{\varepsilon}\rangle-1\bigr),
\end{equation*}
so that, in particular, $\langle \psi^\varepsilon_1, a_j\rangle=0$ for any $j$.
Thus, equation \eqref{eqwk_bis} for $k=1$ becomes
\begin{equation}\label{eqw1}
	\frac{dw_1}{dt}=\lambda^\varepsilon_1(\zeta)\,w_1
		-\theta^\varepsilon(\zeta) \sum_{\ell, j} \langle \psi^\varepsilon_1, b_{j\ell}\rangle \,w_\ell\,w_j
\end{equation}
Now let us set
\begin{equation*}
	E_k(s,t):=\exp\left( \int_s^t \lambda_k^\varepsilon(\zeta(\sigma))d\sigma\right).
\end{equation*}
As a consequence of hypothesis {\bf H2.}, there exists $C>0$ such that 
$\textrm{\rm Re}\,\lambda_k(\xi)\leq \lambda_1(\xi)-Ck^2$ for any $k\geq 2$.
Thus, the absolute value of $E_k$, $k\geq 2$, can be estimated by
\begin{equation*}
	|E_k|(s,t)\leq \exp\left( \int_s^t \textrm{\rm Re}\, \lambda_k^\varepsilon(\zeta(\sigma))d\sigma\right)
	\leq  E_1(s,t)\,e^{-Ck^2(t-s)}
\end{equation*}
From equalities \eqref{eqw1} and \eqref{eqwk_bis}, choosing $w_1(0)=0$, there follow
\begin{equation*}
	\begin{aligned}
	w_1(t)&=-\int_0^t \theta^\varepsilon(\zeta)
		\sum_{\ell, j} \langle \psi^\varepsilon_1, b_{j\ell}\rangle \,w_\ell\,w_j
		\,E_1(s,t)\,ds\\
	w_k(t)&=w_k(0)\,E_k(0,t)\\
		& +\int_0^t \Bigl\{\langle \psi^\varepsilon_k,H^\varepsilon\rangle
			-\theta^\varepsilon(\zeta)\sum_{j}\Bigl(\langle \psi^\varepsilon_k, a_j\rangle 
			+\sum_{\ell} \langle \psi^\varepsilon_k, b_{j\ell}\rangle \,w_\ell\Bigr) w_j
			\Bigr\} E_k(s,t)\,ds,
	\end{aligned}
\end{equation*}
for $k\geq 2$.
Such expressions suggest to introduce the function 
\begin{equation*}
	z(x,t):=\sum_{k\geq 2} w_k(0)\,E_k(0,t)\,\phi^\varepsilon_k(x;\zeta(t)),
\end{equation*}
From the representation formulas for the coefficients $w_k$, since
\begin{equation*}
 	|\theta^\varepsilon(\zeta)|\leq C\,\Omega^\varepsilon(\zeta)
		\qquad\textrm{and}\qquad
	|\langle \psi^\varepsilon_k,H^\varepsilon\rangle| \leq C\,\Omega^\varepsilon(\zeta)
		\left\{1+ |\langle \psi_k^\varepsilon, \partial_\xi U^\varepsilon\rangle|\right\}
\end{equation*}
for some constant $C>0$ depending on the $L^\infty-$norm of $\psi^\varepsilon_k$,
there holds
\begin{equation*}
	\begin{aligned}
	&|w-z|_{{}_{L^2}}^2\leq C\Bigl(\int_0^t \Omega^\varepsilon(\zeta)
		\sum_{j}|\langle \psi^\varepsilon_1, \,\partial_\xi \phi^\varepsilon_j\rangle|\,|w_j|\,
		\sum_{\ell} |\langle \partial_{\xi} \psi^\varepsilon_1, \phi^\varepsilon_\ell\rangle|\,|w_\ell|
		\,E_1(s,t)\,ds\Bigr)^2\\
	&\qquad +C\sum_{k\geq 2}\Bigl(\int_0^t
		 \Omega^\varepsilon(\zeta)\Bigl(1+ |\langle \psi_k^\varepsilon, \partial_\xi U^\varepsilon\rangle|
		+|\langle \psi^\varepsilon_k,\partial_{\xi}U^{\varepsilon}\rangle|
			\sum_{j}|\langle \partial_{\xi} \psi^\varepsilon_1, \phi^\varepsilon_j\rangle||w_j|\\
	&\qquad
		+\sum_{j}|\langle \partial_\xi \psi^\varepsilon_k,  \phi^\varepsilon_j\rangle||w_j|
		+\sum_{j} |\langle \psi^\varepsilon_k,\partial_\xi \phi^\varepsilon_j\rangle|\,|w_j|
		\sum_{\ell}|\langle \partial_{\xi} \psi^\varepsilon_1, \phi^\varepsilon_\ell\rangle|\,|w_\ell|\Bigr)
			|E_k|(s,t)\Bigr)^2\\
	&\hskip1cm
	\leq C\Bigl(\int_0^t \Omega^\varepsilon(\zeta)|w|_{{}_{L^2}}^2 E_1(s,t)\,ds\Bigr)^2
		+C\sum_{k\geq 2}\Bigl(\int_0^t
		 \Omega^\varepsilon(\zeta)\bigl(1+|w|_{{}_{L^2}}^2\bigr)|E_k|(s,t)\,ds\Bigr)^2
	\end{aligned}
\end{equation*}
Since $\sqrt{a+b}\leq \sqrt{a}+\sqrt{b}$, we infer
\begin{equation*}
	\begin{aligned}
	|w-z|_{{}_{L^2}}&\leq 
		C\int_0^t \Omega^\varepsilon(\zeta)|w|_{{}_{L^2}}^2\,E_1(s,t)\,ds
		+C\sum_{k\geq 2}\int_0^t\Omega^\varepsilon(\zeta)
			\bigl(1+|w|_{{}_{L^2}}^2\bigr)|E_k|(s,t)\,ds\\
				&\leq 
		C\int_0^t \Omega^\varepsilon(\zeta)\Bigl\{|w|_{{}_{L^2}}^2\,E_1(s,t)
			+\bigl(1+|w|_{{}_{L^2}}^2\bigr)\,\sum_{k\geq 2} |E_k|(s,t)\Bigr\}\,ds.
	\end{aligned}
\end{equation*}
The assumption on the asymptotic behavior of the eigenvalues $\lambda_k$
can now be used to bound the series.
Indeed, there holds for some $C>0$
\begin{equation*}
	\sum_{k\geq 2} |E_k(s,t)| \leq \sum_{k\geq 2} E_1(s,t)\,e^{-Ck^2(t-s)}
		\leq C\,E_1(s,t)\,(t-s)^{-1/2}\,e^{-C(t-s)}
\end{equation*}
As a consequence, for unknown $w$ such that $|w|_{{}_{L^2}}\leq M$ for some $M>0$, we infer
\begin{equation*}
	E_1(t,0) |w-z|_{{}_{L^2}}\leq 
		C\int_0^t \Omega^\varepsilon(\zeta)\Bigl\{|w-z|_{{}_{L^2}}^2
			+|z|_{{}_{L^2}}^2+(t-s)^{-1/2}\,e^{-C(t-s)}\Bigr\}E_1(s,0)\,ds.
\end{equation*}
Let us set
\begin{equation*}
	N(t):= \sup_{s\in[0,t]} |w-z|_{{}_{L^2}}\,E_1(s,0)
\end{equation*}
Then, since $|z|_{{}_{L^2}}\leq e^{-2C\,t}E_1(0,t)|w_0|_{{}_{L^2}}$,  we infer
\begin{equation*}
	\begin{aligned}
	E_1(t,0) &|w-z|_{{}_{L^2}}\leq C\int_0^t \Omega^\varepsilon(\zeta) N^2(s)\,E_1(0,s)\,ds\\
		&+C\int_0^t \Omega^\varepsilon(\zeta)\Bigl\{ e^{-4C(t-s)}E_1(0,t)^2|w_0|_{{}_{L^2}}^2
			+(t-s)^{-1/2}\,e^{-C(t-s)}\Bigr\}E_1(s,0)\,ds	\\
		&\quad
		\leq C\int_0^t \Omega^\varepsilon(\zeta) N^2(s)\,E_1(0,s)\,ds
			+C|\Omega^\varepsilon|_{{}_{\infty}}\Bigl(E_1(0,t)|w_0|_{{}_{L^2}}^2+E_1(t,0)\Bigr)
	\end{aligned}
\end{equation*}
since $\lambda_1$ is negative.
By assumption \eqref{meta}, $\lambda_1^\varepsilon\leq -C\Omega^\varepsilon$
for some $C>0$, hence 
\begin{align*}
	\int_0^t \Omega^\varepsilon(\zeta)N^2(s)\,E_1(0,s)\,ds&\leq
      	\int_0^t \Omega^\varepsilon(\zeta)
		N^2(s)\,\exp\left(-C\int_0^s\Omega^\varepsilon(\zeta)\,d\sigma\right)\,ds\\ 
	&\leq N^2(t) \biggl\{ 1- \exp\left(-C\int_0^t \Omega^\varepsilon(\zeta)\,d\sigma\right)\biggr\}.
\end{align*}
so that we obtain the inequality
\begin{equation*}
       \begin{aligned}
       	E_1(t,0)|w-z|_{{}_{L^2}} &\leq CN^2(t) 
       	\biggl\{1- \exp\left(-C\int_0^t \Omega^\varepsilon(\zeta)\,d\sigma\right) \biggr\}\\
		&\quad
		+C|\Omega^\varepsilon|_{{}_{\infty}}\Bigl(E_1(0,t)|w_0|_{{}_{L^2}}^2+E_1(t,0)\Bigr)
       \end{aligned}
\end{equation*}
Taking the supremum, we end up with the estimate 
\begin{equation*}
	N(t) \leq A N^2(t)+B
		\qquad\textrm{with}\quad
		\left\{\begin{aligned}
		A&:=C\left\{1- \exp\left(-C\int_0^t \Omega^\varepsilon(\zeta)\,d\sigma\right)\right\},\\
		B&:=C|\Omega^\varepsilon|_{{}_{\infty}}\Bigl(E_1(0,t)|w_0|_{{}_{L^2}}^2+E_1(t,0)\Bigr)
		\end{aligned}\right.
\end{equation*}
Hence, as soon as
\begin{equation}\label{timecond}
  4AB=4C^2|\Omega^\varepsilon|_{{}_{\infty}}\Bigl(E_1(0,t)|w_0|_{{}_{L^2}}^2+E_1(t,0)\Bigr)
	\Bigl( 1- \exp\left(-C\int_0^t \Omega^\varepsilon(\zeta)\,d\sigma\right)\Bigr)<1
\end{equation}
there holds
\begin{equation*}
	N(t)\leq  \frac{2B}{1+\sqrt{4AB}}\leq 2B 
		= C|\Omega^\varepsilon|_{{}_{\infty}}\Bigl(E_1(0,t)|w_0|_{{}_{L^2}}^2+E_1(t,0)\Bigr)
\end{equation*}
that means, in term of the difference $w-z$, 
\begin{equation*}
	|w-z|_{{}_{L^2}}\,\leq C|\Omega^\varepsilon|_{{}_{\infty}}
		\Bigl(E_1(0,t)^2|w_0|_{{}_{L^2}}^2+1\Bigr)
\end{equation*}
Condition \eqref{timecond} gives a constraint on the final time $T^\varepsilon$.
Since $ 1- e^{-C\int_0^t \Omega^\varepsilon(\zeta)\,d\sigma}\leq 1$ and $E_1(0,t)\leq 1$,
it is enough to require
\begin{equation*}
         4C^2\,|\Omega^\varepsilon|_{{}_{\infty}}
 	\Bigl(|w_0|^2_{{}_{L^2}}+E_1(t,0) \Bigr) <1
\end{equation*}
to assure condition \eqref{timecond} is satisfied.
The latter constraint can be rewritten as
\begin{equation*}
	C\,\exp\left(-\int_0^t \Omega^\varepsilon(\zeta)\,d\sigma\right)
	\leq \exp\left(-\int_0^t \lambda^\varepsilon_1(\zeta)\,d\sigma\right)
	=E_1(t,0) \leq \frac{C}{|\Omega^\varepsilon|_{{}_{\infty}}} -|w_0|^2_{{}_{L^2}},
\end{equation*}
so that we can choose $T^\varepsilon$ of the form
\begin{equation*}
 	T^\varepsilon:=\frac{1}{|\Omega^\varepsilon|_{{}_{\infty}}}
		\ln\left(\frac{C}{|\Omega^\varepsilon|_{{}_{\infty}}} -|w_0|^2_{{}_{L^2}}\right)
		\sim -C\,|\Omega^\varepsilon|_{{}_{\infty}}^{-1}
		\ln |\Omega^\varepsilon|_{{}_{\infty}}
\end{equation*}
for $w_0$ sufficiently small.
\end{proof}

As a consequence  of the estimate \eqref{boundw},
for $|w_0|_{{}_{L^2}}< M$ for some $M>0$, the function $\zeta$ satisfies
\begin{equation}\label{eqzeta}
	\frac{d\zeta}{dt}=\theta^\varepsilon(\zeta)\bigl(1 + r\bigr)
	\qquad\textrm{with}\quad
	|r|\leq C\bigl(|w_0|_{{}_{L^2}}\,e^{-C\,t}+|\Omega^\varepsilon|_{{}_{\infty}}\bigr).
\end{equation}
where the constant $C$ depends also on $M$.
In particular, if $\varepsilon$ and $|w_0|_{{}_{L^2}}$ are small, the function $\zeta=\zeta(t)$
has similar decay properties of the function $\eta$, solution to the reduced Cauchy problem
\begin{equation*}
	\frac{d\eta}{dt}=\theta^\varepsilon(\eta),\qquad \eta(0)=\zeta_0.
\end{equation*}
This preludes to the following consequence of Theorem \ref{thm:metaL}.

\begin{corollary}\label{cor:metaL2}
Let hypotheses {\bf H1-2-3} and \eqref{meta} be satisfied. Assume also
\begin{equation}
	s\,\theta^\varepsilon(s)<0\quad\textrm{ for any } s\in I,\,s\neq 0
	\qquad\textrm{ and }\qquad
	{\theta^\varepsilon}'(\bar \zeta)<0.
\end{equation}
Then, for $\varepsilon$ and $|w_0|_{{}_{L^2}}$ sufficiently small, the estimate \eqref{boundw} holds 
globally in time and the solution $(\zeta,w)$ converges exponentially fast to $(\bar \zeta,0)$ as $t\to+\infty$.
\end{corollary}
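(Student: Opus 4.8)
The plan is to push the finite-time conclusion of Theorem~\ref{thm:metaL}, in the form \eqref{eqzeta}, into the scalar dynamics for $\zeta$, use the two sign hypotheses to trap $\zeta$ inside a fixed compact subinterval of $J$ on which all of {\bf H1-2-3} and \eqref{meta} continue to hold, and then re-run the nonlinear iteration of Theorem~\ref{thm:metaL} globally in time. First I would analyse the equation for $\zeta$: as long as \eqref{boundw} holds with $|w_0|_{L^2}<M$, estimate \eqref{eqzeta} reads $\dot\zeta=\theta^\varepsilon(\zeta)(1+r)$ with $|r|\le C(|w_0|_{L^2}e^{-Ct}+|\Omega^\varepsilon|_\infty)$, so for $\varepsilon$ and $|w_0|_{L^2}$ small we may assume $|r|\le\tfrac12$ throughout, whence $1+r>0$ and $\dot\zeta$ has the sign of $\theta^\varepsilon(\zeta)$, i.e. of $-(\zeta-\bar\zeta)$ by the first hypothesis (which makes $\bar\zeta$, the unique zero of $\theta^\varepsilon$ in $I$, a global attractor of $\dot\eta=\theta^\varepsilon(\eta)$). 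Hence $\zeta$ never crosses $\bar\zeta$, $|\zeta(t)-\bar\zeta|$ is nonincreasing, and $\zeta(t)$ stays in the fixed compact set $\{\,|\xi-\bar\zeta|\le|\zeta_0-\bar\zeta|\,\}\subset J$; on this set the first hypothesis together with ${\theta^\varepsilon}'(\bar\zeta)<0$ and compactness give $\theta^\varepsilon(s)\,\sgn(s-\bar\zeta)\le-\mu^\varepsilon|s-\bar\zeta|$ for some $\mu^\varepsilon>0$, so a Gronwall comparison yields $|\zeta(t)-\bar\zeta|\le|\zeta_0-\bar\zeta|\,e^{-c^\varepsilon t}$ with $c^\varepsilon:=\tfrac12\mu^\varepsilon>0$. (This rate is genuinely $\varepsilon$-dependent and, in the application, exponentially small — the slow motion.)

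Next I would globalise the bound on $w$. Because $\zeta(t)$ never leaves the compact set above, {\bf H1-2-3} and \eqref{meta} hold along the entire trajectory, so the Duhamel/iteration argument in the proof of Theorem~\ref{thm:metaL} can be carried out for arbitrarily large $t$; the point is that, with $\zeta$ confined and $\Omega^\varepsilon$ continuous, the auxiliary constants $A$ and $B$ of that argument stay uniformly bounded in $t$ (the growing factor $E_1(t,0)$ in $B$ may be replaced by a uniformly bounded quantity, bounding $E_1(s,0)\le1$ inside the relevant integral), so that $4AB<1$ for every $t\ge0$ once $\varepsilon$ and $|w_0|_{L^2}$ are small, and the nonlinear bootstrap closes on $[0,+\infty)$; this gives \eqref{boundw} for all $t$. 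Finally, writing $w=z+R$, hypothesis {\bf H2} yields $|z(\cdot,t)|_{L^2}\le Ce^{-4Ct}|w_0|_{L^2}$, decaying at a rate independent of $\varepsilon$, while $R$ is driven by a forcing of size $O(\Omega^\varepsilon(\zeta(t)))$ which tends to $O(\Omega^\varepsilon(\bar\zeta))$; using that the residual $H^\varepsilon(\bar\zeta)=\mathcal F^\varepsilon[U^\varepsilon(\cdot;\bar\zeta)]$ vanishes (the equilibrium $U^\varepsilon(\cdot;\bar\zeta)$ being a genuine stationary solution of \eqref{cauchy}), and that $\zeta(t)\to\bar\zeta$ exponentially, a last Duhamel estimate on the components $\{w_k\}_{k\ge2}$ (with $|E_k(s,t)|\le e^{-4C(t-s)}$ and the now-exponentially-decaying forcing along $\zeta(t)$) gives $|w(t)|_{L^2}\le Ce^{-ct}$ for some $c>0$. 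Combined with the first step, $(\zeta(t),w(t))\to(\bar\zeta,0)$ exponentially fast.

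The hard part is the coupled bootstrap linking the two preceding steps: one needs $w$ small in $L^2$ in order to conclude from \eqref{eqzeta} that $\zeta$ is driven towards $\bar\zeta$, and one needs $\zeta$ confined to $J$ in order to keep the spectral hypotheses — and hence \eqref{boundw} — in force; the sign conditions on $\theta^\varepsilon$ are exactly what makes this loop close. The subtle point throughout is uniformity in time: the contraction rate $c^\varepsilon$ of $\zeta-\bar\zeta$, hence the rate at which the forcing in the $w$-equation turns off, is only $\varepsilon$-dependent and can be exponentially small, so every constant in the iteration and every ``$\varepsilon$ small'' threshold must be checked to be independent of $t$.
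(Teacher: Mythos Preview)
Your overall strategy matches the paper's: use the sign hypotheses together with \eqref{eqzeta} to drive $\zeta$ exponentially to $\bar\zeta$, then feed this back into the $w$ equation. Your step~1 is in fact more careful than the paper about the confinement of $\zeta$ in a fixed compact subinterval of $J$.

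The gap is in step~2. You claim that in the bootstrap of Theorem~\ref{thm:metaL} ``the growing factor $E_1(t,0)$ in $B$ may be replaced by a uniformly bounded quantity, bounding $E_1(s,0)\le1$ inside the relevant integral.'' But by definition $E_1(s,0)=\exp\bigl(-\int_0^s\lambda_1^\varepsilon\,d\sigma\bigr)$ with $\lambda_1^\varepsilon<0$, so $E_1(s,0)\ge1$ and is increasing in $s$; the inequality goes the wrong way. The growth of $B$ is genuine, and the requirement $4AB<1$ is exactly what produces the finite-time threshold $T^\varepsilon$ in Theorem~\ref{thm:metaL}; it cannot be removed by a sign observation. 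A correct globalisation has to go through either an iterated application of Theorem~\ref{thm:metaL} (restarting at $T^\varepsilon$, where the data are no worse), or a direct argument that bypasses the $w=z+R$ splitting.

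The paper does the latter for the $w$-convergence. Rather than re-running the $z+R$ bootstrap, it returns to the Duhamel formula for the coefficients $w_k$ from \eqref{eqwk_bis}, sums to obtain
\[
|w|_{L^2}^2(t)\le C\Bigl\{|w_0|_{L^2}^2\,e^{2\Lambda_1^\varepsilon t}+t\int_0^t|F|_{L^2}^2(s)\,e^{2\Lambda_1^\varepsilon(t-s)}\,ds\Bigr\},
\qquad\Lambda_1^\varepsilon:=\sup_\zeta\lambda_1^\varepsilon(\zeta)<0,
\]
and then postulates $|F|_{L^2}(t)\le Ce^{-\nu^\varepsilon t}$ to conclude. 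That postulate is itself informal (since $F$ contains terms quadratic in $w$, so it presupposes the very bound being proved), and your observation that $\mathcal F^\varepsilon[U^\varepsilon(\cdot;\bar\zeta)]=0$ --- hence the forcing decays at the rate of $|\zeta(t)-\bar\zeta|$ --- is precisely the mechanism the paper is invoking. So both arguments leave the same coupled bootstrap somewhat open; the difference is that the paper abandons the $z+R$ machinery in favour of one Jensen/Duhamel estimate on the full $w$, whereas you try to push the Theorem~\ref{thm:metaL} bootstrap itself to infinite time, and that attempt as written contains the sign error above.
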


\begin{proof}
Thanks to assumption {\bf H1}, for $\varepsilon$ and $|w_0|_{{}_{L^2}}$ sufficiently small,
estimate \eqref{boundw} holds.
Hence, for any initial datum $\zeta_0$, the variable $\zeta=\zeta(t)$ satisfies \eqref{eqzeta}.
and, as a consequence, it converges exponentially fast to $\bar \zeta$ as $t\to+\infty$,
i.e. there exists $\beta^\varepsilon>0$ such that 
$|\zeta-\bar \zeta|\leq |\zeta_0|e^{-\beta^\varepsilon t}$
for any $t$ under consideration.

Furthermore, from \eqref{eqwk_bis}, we deduce
\begin{equation*}
	w_k(t)=w_k(0)\,\exp\left(\int_{0}^t \lambda^\varepsilon_k\,d\sigma\right)
		+\int_{0}^{t} \langle \psi^\varepsilon_k,F\rangle(s)
		\,\exp\left(\int_{s}^t \lambda^\varepsilon_k\,d\sigma\right)\,ds
\end{equation*}
Setting $\Lambda^\varepsilon_1:=\sup\{\lambda^\varepsilon_1(\zeta)\,:\,\zeta\in J\}$,
by the Jensen's inequality, we infer the estimate
\begin{equation*}
	\begin{aligned}
	|w|_{{}_{L^2}}^2(t)&\leq C\left\{|w_0|_{{}_{L^2}}^2\,e^{2\Lambda^\varepsilon_1\,t}
		+\sum_{k}\left(\int_{0}^{t} \langle \psi^\varepsilon_k,F\rangle(s)
		\,e^{\Lambda^\varepsilon_1(t-s)}\,ds\right)^2\right\}\\
		&\leq C\left\{|w_0|_{{}_{L^2}}^2\,e^{2\Lambda^\varepsilon_1\,t}
		+t\,\int_{0}^{t} |F|_{{}_{L^2}}^2(s)\,e^{2\Lambda^\varepsilon_1(t-s)}\,ds\right\}
	\end{aligned}
\end{equation*}
Let $\nu^\varepsilon>0$ be such that $|F|_{{}_{L^2}}(t)\leq C\,e^{-\nu^\varepsilon\,t}$; 
then, if $\nu^\varepsilon \neq |\Lambda_1^\varepsilon|$, there holds
\begin{equation*}
	 |w|_{{}_{L^2}}^2(t)\leq C\left\{|w_0|_{{}_{L^2}}^2\,e^{2\Lambda^\varepsilon_1\,t}
		+t\left(e^{-2\nu^\varepsilon \,t}+e^{2\Lambda^\varepsilon_1\,t}\right)\right\}
\end{equation*}
showing the exponential convergence to $0$ of the component $w$.
\end{proof}

Let us also stress that in the regime $(\zeta,w)\sim (\bar \zeta,0)$, a linearization at the equilibrium solution
$U^\varepsilon(x;\bar \zeta)$ would furnish a more detailed description of the dynamics, since the
source term due to the approximation at an approximate steady state would not be present.
In fact, the description given by the quasi-linearization is meaningful in the regime far from
equilibrium and its aim is to describe the slow motion around a manifold of approximate
solutions.

\section{Application to scalar viscous conservation laws}\label{Sect:Appl}

Next, our aim is to show how the general approach just presented
applies to the case of scalar conservation laws with viscosity.
Specifically, given $\ell>0$, we consider the nonlinear equation
\begin{equation}\label{cauchyBurg1}
		\partial_t u +\partial_x f(u)	=\varepsilon\,\partial_x^2u 
		\qquad\qquad x\in I:=(-\ell,\ell)
\end{equation}
with initial and boundary conditions given by
\begin{equation}\label{cauchyBurg2}
	u(x,0)=u_0(x)\qquad x\in I,
	\qquad\textrm{and}\qquad
	u(\pm\ell,t)=u_\pm \qquad t>0.
\end{equation}
for some $\varepsilon>0$, $u_\pm\in\mathbb{R}$.
We assume that the flux $f$ and the data $u_\pm$ satisfy the conditions 
\begin{equation}\label{fluxhyp2}
	f''(u)\geq c_0>0,\qquad f'(u_+)<0<f'(u_-),\qquad f(u_+)=f(u_-).
\end{equation}
The single value $u\in (u_+,u_-)$ such that $f'(u)=0$ is denoted by $u_\ast$.
Without loss of generality, we assume $f(u_\ast)=0$.

To clarify the relevance of the requirements \eqref{fluxhyp2} and to justify the subsequent choice
for the manifold $\{U^{\varepsilon}(\cdot;\xi)\,:\,\xi\in J\}$, we propose a
digression on the dynamics determined by the problem \eqref{cauchyBurg1}-\eqref{cauchyBurg2}
in the vanishing viscosity limit.

\subsection*{The hyperbolic dynamics}
Setting $\varepsilon=0$, equation \eqref{cauchyBurg1} reduces to the first-order equation
of hyperbolic type
\begin{equation}\label{unvisc}
	\partial_t u + \partial_x f(u)=0
\end{equation}
to be considered together with \eqref{cauchyBurg2}.
The boundary conditions are understood in the sense of Bardos--leRoux--N\'ed\'elec 
\cite{BardLeRoNede79}, meaning that the trace of the solution at the boundary is
requested to take values in appropriate sets.
To be precise, let $u_\ast\in(u_+,u_-)$ be such that $f'(u_\ast)=0$ and set
 \begin{equation*}
  \mathcal{R}u:=\left\{\begin{aligned}
   	&w		&\qquad	&\textrm{if}\;\exists w\neq u\textrm{ s.t. }f(w)=f(u),\\
   	&u_\ast	&\qquad	&\textrm{if }u=u_\ast,
                  \end{aligned}\right.
 \end{equation*}
Then, skipping the details (see \cite{MascTerr99}), the conditions $u(\pm\ell,t)=u_\pm$ translate into
\begin{equation*}
	u(-\ell+0,t)\in  (-\infty,\mathcal{R}u_-)]\cup\{u_-\},\qquad
	u(\ell-0,t)\in  \{u_+\}\cup[\mathcal{R}u_+,+\infty)
\end{equation*} 
Since $f(u_+)=f(u_-)$, there holds $\mathcal{R}u_\pm=u_\mp$, and
the conditions can be rewritten as 
\begin{equation*}
	u(-\ell+0,t)\in  (-\infty,u_+]\cup\{u_-\},\qquad
	u(\ell-0,t)\in  \{u_+\}\cup[u_-,+\infty)
\end{equation*} 
From the boundary conditions, it follows that characteristic curves entering in the domain
from the left side $x=-\ell$, respectively, from the right $x=\ell$, possess speed $f'(u_-)$,
resp. speed $f'(u_+)$.

For \eqref{unvisc} with conditions \eqref{cauchyBurg2} a {\it finite-time stabilization phenomenon}
holds, similar to the one  showed for the first time in \cite{Liu78} in the case of the Cauchy  problem.

\begin{theorem}\label{thm:stabhyp}
Let $u_+<0<u_-$ and $f$ be such that \eqref{fluxhyp2} holds.
Then, for any $u_0\in \textrm{BV}(-\ell,\ell)$, the solution $u$ to the initial-boundary
value problem \eqref{unvisc}--\eqref{cauchyBurg2} is such that
for some $T>0$  and $\xi\in[-\ell,\ell]$, there holds
\begin{equation*}
	u(x,T)=U_{{}_{\textrm{hyp}}}(x;\xi)
		:= u_-\chi_{{}_{(-\ell,\xi)}}(x) +u_+\chi_{{}_{(\xi,\ell)}}(x)
\end{equation*}
for almost any $x$ in $I$.
\end{theorem}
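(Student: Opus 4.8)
The plan is to combine the Bardos--le Roux--N\'ed\'elec theory of the boundary conditions with Dafermos's theory of generalized characteristics for genuinely nonlinear scalar conservation laws. Since $f''\ge c_0>0$, problem \eqref{unvisc}--\eqref{cauchyBurg2} has a unique entropy--BLN solution $u$, which by $L^1$-contraction and comparison satisfies $u(x,t)\in[m_0,M_0]$ a.e., with $m_0:=\min\{\textrm{ess\,inf}\,u_0,u_+\}$ and $M_0:=\max\{\textrm{ess\,sup}\,u_0,u_-\}$; off a locally finite family of Lipschitz shock curves $u$ is carried by straight backward characteristics of constant value, the minimal (resp.\ maximal) backward characteristic from a point being shock-free until it meets $\{t=0\}$ or a lateral boundary, and equal to $u(x+,t)$ (resp.\ $u(x-,t)$). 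The first step is to produce $T_1<\infty$ such that, for $t\ge T_1$: \emph{(a)} $u(x,t)\in[u_+,u_-]$ a.e.; \emph{(b)} the BLN trace of $u$ is $u_-$ at $x=-\ell$ and $u_+$ at $x=\ell$. For (a), a value outside $[u_+,u_-]$ has $f'$ of fixed sign bounded away from $0$ (convexity and $f'(u_+)<0<f'(u_-)$), so its characteristics leave $I$ within time $2\ell/\delta_0$. For (b), once $u\in[u_+,u_-]$ the admissible BLN set $(-\infty,u_+]\cup\{u_-\}$ at $x=-\ell$ meets $[u_+,u_-]$ only at $u_\pm$, and a transient $u_+$-layer against $x=-\ell$ is eaten at a definite rate by the adjacent rarefaction (whose left edge moves with speed $f'(u_+)<0$); there being finitely many such layers because $u_0\in\textrm{BV}$, after a finite time the trace is $u_-$ and then stays $u_-$ (there $f'(u_-)>0$ and characteristics leave the wall carrying $u_-$). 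The right boundary is symmetric; this boundary-layer argument is of the type carried out in \cite{MascTerr99}.

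For $t\ge T_1$ set
\begin{equation*}
	p(t):=\sup\{x:u(\cdot,t)\equiv u_-\ \textrm{on}\ (-\ell,x)\},
	\qquad
	q(t):=\inf\{x:u(\cdot,t)\equiv u_+\ \textrm{on}\ (x,\ell)\},
\end{equation*}
with $\sup\emptyset:=-\ell$, $\inf\emptyset:=\ell$. By (b) the wall $x=-\ell$ emits only $u_-$ for $t>T_1$, so $p(t)>-\ell$, and the right edge of the $u_-$-region is a shock from $u_-$ to $w(t):=u(p(t)+,t)\in[u_+,u_-)$, of speed
\begin{equation*}
	\dot p(t)=\frac{f(u_-)-f(w(t))}{u_--w(t)}\ \ge\ 0,
\end{equation*}
the inequality because $f(u_\pm)=\max_{[u_+,u_-]}f$ by convexity (with equality only if $w(t)=u_+$). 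Hence $p$ is nondecreasing and, symmetrically, $q$ is nonincreasing. If $p(T)=q(T)=:\xi$ for some finite $T$, then $u(\cdot,T)=U_{{}_{\textrm{hyp}}}(\cdot;\xi)$ and we are done; so it remains to exclude $p(t)<q(t)$ for all $t\ge T_1$.

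Assume $p(t)<q(t)$ for all $t\ge T_1$; then $p\uparrow p_\infty$ (bounded) and $q\downarrow q_\infty$. Fix $\bar t$ large and $\bar x\in(p(\bar t),q(\bar t))$. The minimal backward characteristic from $(\bar x,\bar t)$ cannot carry the value $u_-$: otherwise $(\bar x,\bar t)$ would lie on a $u_-$-characteristic issuing from $x=-\ell$ (which stays in $\{x\le p(t)\}$) or issuing from $\{t=T_1\}$ (which forces $\bar x=x_0+f'(u_-)(\bar t-T_1)>\ell$ for $\bar t$ large), both contradicting $\bar x\in(p(\bar t),q(\bar t))$; symmetrically it cannot carry $u_+$. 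Hence it is a straight segment joining $(\bar x,\bar t)$ either to $\{t=0\}$, or to $\{x=\pm\ell\}$ at a time $\le T_1$, or to $\{t=T_1\}$, its endpoint lying in a set of diameter $\le 2\ell$; so its slope has absolute value at most $2\ell/(\bar t-T_1)$, and by genuine nonlinearity the value $\bar u_{\min}$ it carries satisfies $\bar u_{\min}\to u_*$ as $\bar t\to\infty$, uniformly in $\bar x$. The same holds for the maximal backward characteristic, so $u(\cdot,\bar t)\to u_*$ uniformly on $(p(\bar t),q(\bar t))$; in particular $w(\bar t)\to u_*$, whence
\begin{equation*}
	\dot p(\bar t)\ \longrightarrow\ \frac{f(u_-)-f(u_*)}{u_--u_*}=\frac{f(u_-)}{u_--u_*}>0
\end{equation*}
(recall $f(u_*)=0<f(u_+)=f(u_-)$), contradicting $p(\bar t)\le\ell$. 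Therefore $p(T)=q(T)$ for some finite $T$ and $u(\cdot,T)=U_{{}_{\textrm{hyp}}}(\cdot;\xi)$ with $\xi:=p(T)\in[-\ell,\ell]$.

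Finally, $\xi$ can be read off $u(\cdot,T)$; equivalently, since $f(u_+)=f(u_-)$ one has $\frac{d}{dt}\int_{-\ell}^{\ell}u\,dx=f(u_-)-f(u_+)=0$ for $t\ge T_1$, so $\xi$ is fixed by the conserved mass and lies in $[-\ell,\ell]$ because $u(\cdot,T_1)\in[u_+,u_-]$ (the endpoints $\xi=\pm\ell$ corresponding to the constant steady states $u\equiv u_\mp$). The main obstacle is the third step: turning the heuristic ``the middle region is compressed from both sides at definite speed'' into a genuine \emph{finite-time} statement relies on the shock-freeness of extremal backward characteristics and on the quantitative slope/value correspondence; the finite-time stabilization of the boundary traces in (b) is the secondary technical point.
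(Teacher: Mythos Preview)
Your argument follows essentially the same route as the paper's: confine the solution to $[u_+,u_-]$, track the plateau boundaries $p=\zeta_-$ and $q=\zeta_+$, and use extremal backward characteristics to force the intermediate values toward $u_*$ so that the Rankine--Hugoniot speeds make the two fronts collide in finite time. The paper organizes the middle step differently (it shows directly that $\zeta_\pm$ are genuine discontinuities for large $t$, rather than first stabilizing the boundary traces), and it obtains the confinement $u\in[u_+,u_-]$ by comparison with Riemann solutions rather than by your characteristic argument, but the substance is the same.

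The one point that needs tightening is your step~(b). The claim ``after a finite time the trace at $x=-\ell$ is $u_-$'' is not universally true: for $u_0\equiv u_+$ the solution is the steady state $U_{\textrm{hyp}}(\cdot;-\ell)$ and the left trace is $u_+$ forever. More generally, the justification ``finitely many such layers because $u_0\in\textrm{BV}$'' is not a valid inference (BV allows countably many oscillations). What is actually needed---and is easy---is the observation that once $p(t_0)>-\ell$ for some $t_0$ it remains so thereafter, since the shock at $p$ has nonnegative speed and absorbs any $u_+$-characteristic approaching from the right; and if instead $p\equiv -\ell$ on $[T_1,\infty)$, then your own backward-characteristic argument applied to $q$ alone gives $\dot q\to (f(u_*)-f(u_+))/(u_*-u_+)<0$, forcing $q$ to reach $-\ell$ in finite time and yielding $\xi=-\ell$. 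With this patch the proof is complete and matches the paper's.
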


The proof of the statement relies on the {\it theory of generalized characteristics}, 
introduced  in \cite{Dafe77}.
The convexity assumption on the flux function $f$ guarantees that for any point 
$(x,t)\in (-\ell,\ell)\times (0,+\infty)$ there exist a minimal, respectively maximal, 
backward characteristics, which are classical characteristic curves, hence
straight lines with slope $f'(u(x-0,t))$, resp. $f'(u(x+0,t))$. 

By means of such technique it is possible to follow the evolution of the curves
\begin{equation*}
	\begin{aligned}
		\zeta_-(t)&:=\sup\{x\in I\,:\,u(y,t)=u_-\quad\forall\,y\in(-\ell,x)\}\cup\{-\ell\},\\
		\zeta_+(t)&:=\inf\{x\in I\,:\,u(y,t)=u_+\quad\forall\,y\in(x,\ell)\}\cup\{\ell\}.
	\end{aligned}
\end{equation*}
As an illustrative example, let us first consider the case of a non-increasing
initial datum $u_0$.
Then, for any $t>0$, $u(\cdot, t)$ is non-increasing.
If $\zeta_\pm$ are classical characteristics, the difference between their speeds
of propagation satisfies
\begin{equation*}
	\begin{aligned}
	\frac{d\zeta_+}{dt}-\frac{d\zeta_-}{dt}
		&=f'(u_+)-f'(u_-)\\
		&\leq \frac{f(u)-f(u_+)}{u-u_+}-\frac{f(u_-)-f(u)}{u_--u}
		=\frac{f(u_\pm)-f(u)}{(u_--u)(u-u_+)}\ldbrack u\rdbrack=:-\Phi(u),
	\end{aligned}
\end{equation*}
for any $u\in(u_+,u_-)$.
Since $A:=\inf\{\Phi(u)\,:\,u\in(u_+,u_-)\}$ is strictly positive, the two curves intersect
at a time $T$ that is smaller than $2\ell/A$.

The complete rigorous proof of Theorem \ref{thm:stabhyp} requires more technicalities
and it is reported here for completeness.

\begin{proof}
Let $u=u(x,t)$ be the solution to the initial-boundary value problem under consideration
with initial datum $u_0$.
For later use, we set
In particular, $\zeta_-\leq \zeta_+$. 
We are going to show that $\zeta_-(T)=\zeta_+(T)$ for some $T>0$.
\vskip.15cm

{\it 1. There exists $T_0>0$ such that $u(x,t)\in[u_+,u_-]$ for any $x\in(-\ell,\ell)$.}
\vskip.15cm

Indeed, let $\overline u$ be the solution to the Riemann problem for \eqref{unvisc} with datum
\begin{equation*}
	\bar u_0(x)=\left\{\begin{aligned}
		&u_-					&\qquad 	&x<-\ell,\\
		&\max\{u_-,\sup u_0\}	&\qquad 	&x>-\ell,\\
	\end{aligned}\right.
\end{equation*}
Hence, the restriction of $\bar u$ to $(-\ell,\ell)\times(0,\infty)$ is a super-solution to the
initial boundary value problem under consideration and, by comparison principle for
entropy solution, we infer $u(x,t)\leq \bar u(x,t)$. 
Since $\bar u(x,t)=u_-$ for any $x<f'(u_-)\,t-\ell$, there holds
\begin{equation*}
	u(x,t)\leq u_-\qquad\quad \textrm{for}\; x\in(-\ell,\ell),\; t\geq 2\ell/f'(u_-).
\end{equation*}
A similar estimate from below can be obtained by considering as subsolution
the restriction of $\underline u$ to $(-\ell,\ell)\times(0,\infty)$, where $\underline u$
is the solution to \eqref{unvisc} with initial datum
\begin{equation*}
	\bar u_0(x)=\left\{\begin{aligned}
		&\min\{u_+,\inf u_0\}		&\qquad 	&x<\ell,\\
		&u_+				&\qquad 	&x>\ell,\\
	\end{aligned}\right.
\end{equation*}
From now on, we assume 
that the solution $u$ takes values in the interval $[u_-,u_+]$.
\vskip.15cm

{\it 2. Assume that $-\ell<\zeta_-(t)\leq \zeta_+(t)<\ell$ for any $t$; 
then there exists $T_1>0$ such that $u(\zeta_-(t)+0,t)<u_-$ and 
$u_+<u(\zeta_+(t)-0,t)$ for any $t>T_1$.}
\vskip.15cm

If $u$ is continuous at $(\zeta_-(\tau),\tau)$ for some $\tau>0$, then $u(\zeta_-(\tau)+0,t)=u_-$.
Therefore, the maximal backward characteristic from $(\zeta_-(\tau),\tau)$ is the straight line
$x=\zeta_-(\tau)+f'(u_-)(t-\tau)$.
For $\tau>2L/f'(u_-)$, such curve intersects the boundary $x=-\ell$ at some $\sigma\in(0,\tau)$. 
By continuity, all of the maximal backward characteristics from $(\xi,\tau)$ with $\xi>\zeta_-(t)$
and sufficiently close to $\zeta_-(\tau)$ intersect the boundary $x=-\ell$ at some time $\sigma_\ast(\xi)$
smaller than $\sigma$ and close to it.
Because of the boundary conditions, this may happen if and only if $u(\xi,\tau)=u_-$.
Hence, $u(x,\tau)=u_-$ for $x\in(\zeta_-(\tau),\zeta_-(\tau)+\varepsilon)$ for some $\varepsilon>0$,
in contradiction with the definition of $\zeta_-$.
Thus, continuity of $u$ at $(\zeta_-(\tau),\tau)$  may happen only for $\tau\leq 2L/f'(u_-)$.
A similar assertion holds for $\zeta_+$.
\vskip.15cm

{\it 3.  There exist $T>0$  and $\xi\in[-\ell,\ell]$ such that 
$u(x,t)=U_{{}_{\textrm{hyp}}}(\cdot;\xi)$ for any $t\geq T$.}
\vskip.15cm

Given $\theta>0$, let $T_\theta:=2\ell/\theta$ be such that
\begin{equation*}
	u_-^\theta:=u(\zeta_-(T_\theta)+0,T_\theta)<u_-
		\qquad\textrm{and}\qquad 
	u_+<u_-^\theta:=u(\zeta_+(T_\theta)-0,T_\theta).
\end{equation*} 
Let $x_{-}^{\theta}$ be the maximal backward characteristic from $(\zeta_-(T_\theta),T_\theta)$,
whose equation is $x=\zeta_-(T_\theta)+f'(u_-^\theta)(t-T_\theta)$.
If $x_{-}^{\theta}$ hits the right boundary $x=\ell$ at some positive time, the solution $u$
coincides with $U_{{}_{\textrm{hyp}}}(x;\zeta_-(T_\theta))$.
Otherwise, there holds $\zeta_-(T_\theta)-f'(u_-^\theta)T_\theta<\ell$, which gives
\begin{equation*}
	f'(u_-^\theta)>\frac{\zeta_-(T_\theta)-\ell}{T_\theta}\geq -\frac{2\ell}{T_\theta}=-\theta
\end{equation*}
Similarly, let $x_{+}^{\theta}$ be the maximal backward characteristic from $(\zeta_+(T_\theta),T_\theta)$,
whose equation is $x=\zeta_+(T_\theta)+f'(u_+^\theta)(t-T_\theta)$.
If $x_{+}^{\theta}$ does not intersect the left boundary $x=-\ell$ at some positive time, 
there holds $f'(u_+^\theta)<\theta$.

Hence, for any $\varepsilon>0$, we can choose $\theta$ sufficiently large so that
$u_-^\theta>u_\ast-\varepsilon$ and $u_+^\theta<u_\ast+\varepsilon$.
Thus, we have
\begin{equation*}
	\frac{d\zeta_+}{dt}-\frac{d\zeta_-}{dt}
	<\frac{f(u_+)-f(u_\ast+\varepsilon)}{u_+-u_\ast-\varepsilon}
		-\frac{f(u_-)-f(u_\ast-\varepsilon)}{u_--u_\ast+\varepsilon}
\end{equation*}
which is uniformly negative for $\varepsilon$ sufficiently small.
Hence, the curves $\zeta_+$ and $\zeta_-$ intersect at some finite positive time $T>0$.
\end{proof}

\subsection*{Adding viscosity}
As soon as the viscosity term is switched on, i.e. for $\varepsilon>0$, the number of steady
states for \eqref{cauchyBurg1}--\eqref{cauchyBurg2} drastically reduces with respect to the 
corresponding hyperbolic case. 
Indeed, stationary solution to the problem are implicitly determined by the relation
\begin{equation*}
	\int_{u(x)}^{u_-} \frac{ds}{\kappa-f(s)}=\frac{\ell+x}{\varepsilon}
\end{equation*}
where $\kappa\in(f(u_\pm),+\infty)$ is such that
\begin{equation*}
	\Phi(\kappa):=\int_{u_+}^{u_-} \frac{ds}{\kappa-f(s)}=\frac{2\ell}{\varepsilon}
\end{equation*}
Assumptions \ref{fluxhyp2} on the flux $f$ imply that $\Phi$ is strictly
decreasing and such that 
\begin{equation*}
	\lim_{\kappa\to f(u_\pm)^+} \Phi(\kappa)=+\infty,\qquad
	\lim_{\kappa\to +\infty} \Phi(\kappa)=0.
\end{equation*}
Therefore, for any $\ell>0$, there exists a unique steady state
for \eqref{cauchyBurg1}--\eqref{cauchyBurg2}.

\begin{example}\rm
In the case of Burgers equation, $f(u)=u^2/2$, the value $u_+$ coincides with $-u_-$
and $\Phi$ has the explicit form $\sqrt{2}\,\tanh^{-1}(u_-/\sqrt{2\kappa})/\sqrt{\kappa}$,
so that the value $\sigma$ determining the stationary solution is
uniquely determined by the relation
\begin{equation*}
	\sqrt{2\kappa}\,\tanh(\sqrt{2\kappa}\,\ell/\varepsilon)=u_-.
\end{equation*}
Given $\kappa$, the steady state $U$ has the expression
$U(x)=\sqrt{2\kappa}\,\tanh(-\sqrt{2\kappa}\,x/\varepsilon)$.
\end{example}

Following the general approach introduced in the previous section, we build a
one-parameter family of functions $U^\varepsilon=U^\varepsilon(\cdot;\xi)$ with
$\xi\in J$ converging to $U_{{}_{\textrm{hyp}}}(\cdot;\xi)$ as $\varepsilon\to 0$.
In particular, the parameter set $J$ coincides with the interval $I$
\begin{equation*}
	u(x,T)=U_{{}_{\textrm{hyp}}}(x;\xi)
		:= u_-\chi_{{}_{(-\ell,\xi)}}(x) +u_+\chi_{{}_{(\xi,\ell)}}(x)
\end{equation*}
There are many meaningful choices for $U^\varepsilon$
(see the traveling wave approach in \cite{deGrKara98});
here, we opt for matching at a given point $\xi\in I$
the two stationary solutions  of \eqref{cauchyBurg1} in $(-\ell,\xi)$ and $(\xi,\ell)$,
denoted by $U^\varepsilon_-$ and $U^\varepsilon_+$,
satisfying the boundary conditions
\begin{equation*}
	U^\varepsilon_-(-\ell;\xi)=u_-,\; U^\varepsilon_-(\xi;\xi)=u_\ast
	\qquad\textrm{and}\qquad
	U^\varepsilon_+(\xi;\xi)=u_\ast,\; U^\varepsilon_+(\ell;\xi)=u_+
\end{equation*}
where $u_\ast$ is such that $f'(u_\ast)=0$.
Hence, we set
\begin{equation*}
	U^{\varepsilon}(x;\xi)=\left\{\begin{aligned}
		&U^\varepsilon_-(x;\xi) 	&\qquad &	-\ell<x<\xi<\ell \\
		&U^\varepsilon_+(x;\xi)	&\qquad &-\ell<\xi<x<\ell ,
           \end{aligned}\right.
\end{equation*}
Given $\kappa\in(f(u_\pm),+\infty)$ and $u\in(u_+,u_-)$, let us define
\begin{equation*}
	\Psi_\ast(\kappa, u)=\int_{u_\ast}^{u} \frac{ds}{\kappa-f(s)}
\end{equation*}
Similarly to the case of stationary states, the function $\Phi$ is such that
\begin{equation*}
	\begin{aligned}
	&\Psi_\ast(\kappa_-,\cdot)\;\textrm{decreasing},\qquad
		&\Psi_\ast(\kappa_-,f(u_\pm))=+\infty,\qquad 
		&\Psi_\ast(\kappa_-,+\infty)=0,\\
	&\Psi_\ast(\kappa_+,\cdot)\;\textrm{increasing}\qquad
		&\Psi_\ast(\kappa_+,f(u_\pm))=-\infty,\qquad
		&\Psi_\ast(\kappa_+,+\infty)=0,
	\end{aligned}
\end{equation*}
so that for any $\xi\in(-\ell,\ell)$ there are (unique)
$\kappa_\pm^\varepsilon=\kappa_\pm^\varepsilon(\xi)\in(f(u_\pm),+\infty)$ such that
\begin{equation}\label{defkappapm}
	\varepsilon\Psi_\ast(\kappa_\pm^\varepsilon,u_\pm)\pm\ell=\xi
\end{equation}
Correspondingly, functions $U^\varepsilon_\pm$ are implicitly given by
\begin{equation*}
	\varepsilon\Psi_\ast(\kappa_\pm^\varepsilon,U^\varepsilon_\pm(x;\xi))+x=\xi.
\end{equation*}
By substitution, denoting by $\delta_{x=\xi}$ the Dirac's delta distribution concentrated
at $x=\xi$, there holds in the sense of distributions
\begin{equation}\label{FUeps}
	{\mathcal F}^\varepsilon[U^{\varepsilon}(\cdot;\xi)]
		=\ldbrack \partial_x U^{\varepsilon}\rdbrack_{{}_{x=\xi}}\delta_{{}_{x=\xi}}
		=\frac{1}{\varepsilon}\bigl(\kappa_-^\varepsilon(\xi)-\kappa_+^\varepsilon(\xi)\bigr)\delta_{{}_{x=\xi}}
\end{equation}
with $\kappa^\varepsilon_\pm$ implicitly defined by \eqref{defkappapm}.
As a consequence of the properties of function $\Phi$, the difference function
$\xi\mapsto \kappa_-^\varepsilon(\xi)-\kappa_+^\varepsilon(\xi)$ is monotone 
decreasing and such that
\begin{equation*}
	\lim_{\xi\to \pm\ell^\mp} \bigl(\kappa_-^\varepsilon(\xi)-\kappa_+^\varepsilon(\xi)\bigr)
		=\mp\infty.
\end{equation*}
Then, there exists unique $\xi_\ast\in(-\ell,\ell)$ such that
$(\kappa_-^\varepsilon-\kappa_+^\varepsilon)(\xi_\ast)=0$
and such a value is such that $U^\varepsilon(\cdot;\xi_\ast)$ is 
the unique steady state of the problem.

From the bounds
\begin{equation*}
	\begin{aligned}
		f(u_\pm)+f'(u_+)(u-u_+) &\leq
			f(u)\leq \frac{f(u_\pm)}{u_\ast-u_+}(u_\ast-u)	&\qquad	&u\in[u_+,u_\ast],\\
		f(u_\pm)-f'(u_-)(u_--u) &\leq
			f(u)\leq \frac{f(u_\pm)}{u_--u_\ast}(u-u_\ast)	&\qquad	&u\in[u_\ast,u_-],
	\end{aligned}
\end{equation*}
we locate approximately the differences $\kappa_\pm^\varepsilon(\xi)-f(u_\pm)$
\begin{equation*}
	\begin{aligned}
	\frac{-f'(u_+)(u_\ast-u_+)}{\exp\{-f'(u_+)(\ell-\xi)/\varepsilon\}-1}
		&\leq\kappa_+^\varepsilon(\xi)-f(u_\pm)
		\leq \frac{f(u_\pm)}{\exp\{f(u_\pm)(\ell-\xi)/\varepsilon(u_\ast-u_+)\}-1}\\
	 \frac{f'(u_-)(u_--u_\ast)}{\exp\{f'(u_-)(\ell+\xi)/\varepsilon\}-1}
	 	&\leq\kappa_-^\varepsilon(\xi)-f(u_\pm)
		\leq 
	\frac{f(u_\pm)}{\exp\{f(u_\pm)(\ell+\xi)/\varepsilon(u_--u_\ast)\}-1}.
	\end{aligned}
\end{equation*}
Such bounds show that $|\kappa_-^\varepsilon-\kappa_+^\varepsilon|$ is exponentially
small as $\varepsilon\to 0^+$, uniformly in any compact subset of $(-\ell,\ell)$;
therefore, for any $\delta\in(0,\ell)$, there exist $C_1, C_2>0$, indipendent on $\varepsilon$,
such that
\begin{equation}\label{boundUx}
	\bigl|\ldbrack \partial_x U^{\varepsilon}\rdbrack_{{}_{x=\xi}}\bigr|
	\leq C_1\,e^{-C_2/\varepsilon}
	\qquad\qquad \forall\,\xi\in(-\ell+\delta,\ell-\delta).
\end{equation}
In particular, hypothesis {\bf H1}, stated in Section 2, is satisfied.

Going further, retracing the definitions previously introduced and setting 
$a^\varepsilon:=f'(U^{\varepsilon})$, we consider the operators
\begin{equation*}
	\mathcal{L}^\varepsilon_\xi v:=\varepsilon v''-\bigl(a^{\varepsilon}(\cdot;\xi)\,v\bigr)'
			\qquad\qquad
	\mathcal{L}^{\varepsilon,\ast}_\xi v:=\varepsilon v''+a^{\varepsilon}(\cdot;\xi)\,v'
\end{equation*}
where the adjoint operator $\mathcal{L}^{\varepsilon,\ast}_\xi$ is considered with Dirichlet 
boundary conditions.

For small $\varepsilon$ and $v$, the dynamics of the parameter $\xi$ is
approximately given by 
\begin{equation*}
	\frac{d\xi}{dt}\approx \theta^\varepsilon(\xi), 
	\qquad\textrm{where}\quad
 	\theta^\varepsilon(\xi):=\langle \psi^\varepsilon_1,{\mathcal F[U^{\varepsilon}] \rangle}\\
\end{equation*}
where $\psi^\varepsilon_1$ is the first eigenfunction of the adjoint operator
$\mathcal{L}^{\varepsilon,\ast}_\xi$ satisfying the normalization condition
\begin{equation}\label{normalization}
	\langle \psi^\varepsilon_1(\cdot;\xi), \partial_{\xi}U^{\varepsilon}(\cdot;\xi)\rangle=1,
\end{equation}
For $\varepsilon\sim 0$, the eigenfunction $\psi_1^\varepsilon$ is close to the eigenfunction
of  ${\mathcal L}^{0,\ast}_\xi$ relative to the eigenvalue $\lambda=0$, with
\begin{equation*}
	a^{0}(x;\xi):=f'(u_-)\chi_{{}_{(-\ell,\xi)}}(x)+f'(u_+)\chi_{{}_{(\xi,\ell)}}(x)
\end{equation*}
Hence, we obtain the representation formula
\begin{equation}\label{firstadjoint}
	\psi_1^\varepsilon(x)\approx C\,\psi_1^0(x)
\end{equation}
where
\begin{equation*}
	\psi_1^0(x):=\left\{\begin{aligned}
			&(1-e^{u_+(\ell-\xi)/\varepsilon})(1-e^{-u_-(\ell+x)/\varepsilon})
					&\qquad	&x<\xi,\\
			&(1-e^{-u_-(\ell+\xi)/\varepsilon})(1-e^{u_+(\ell-x)/\varepsilon})
					&\qquad	&x>\xi,		
		\end{aligned}\right.
\end{equation*}
for some $C\in\mathbb{R}$.
In the limit $\varepsilon\to 0$, we obtain
$\psi_1^\varepsilon\approx C$, provided $\xi$ is bounded away from the boundaries $\pm\ell$.
With the approximation
\begin{equation*}
	U^{\varepsilon}(x;\xi)\approx U_{{}_{\textrm{hyp}}}(x;\xi)
		:= u_-\chi_{{}_{(-\ell,\xi)}}(x) +u_+\chi_{{}_{(\xi,\ell)}}(x)
\end{equation*}	
we infer
\begin{equation*}
	\frac{U^{\varepsilon}(x;\xi+h)-U^{\varepsilon}(x;\xi)}{h}
		\approx -\frac{1}{h}\,\ldbrack u\rdbrack\,\chi_{{}_{(\xi,\xi+h)}}(x)
\end{equation*}
so that we expect $\partial_\xi U^\varepsilon$ to converge to $-\ldbrack u\rdbrack\,\delta_{\xi}$ as 
$\varepsilon\to 0$ in the sense of distributions.
Hence, the normalization condition \eqref{normalization} gives the choice $C=-1/\ldbrack u\rdbrack$
in \eqref{firstadjoint}.
Therefore, we deduce an approximate expression for the function $\theta^\varepsilon$
\begin{equation*}
 	\theta^\varepsilon(\xi)\approx -\frac{1}{\ldbrack u\rdbrack}
		\,\langle 1,{\mathcal F[U^{\varepsilon}] \rangle}
		=\frac{1}{\varepsilon\,\ldbrack u\rdbrack}
		\left(\kappa_+^\varepsilon(\xi)-\kappa_-^\varepsilon(\xi)\right).
\end{equation*}
Estimate \eqref{boundUx} shows that the the function $\theta$
has order of magnitude $e^{-C/\varepsilon}$.

\begin{example}\rm
In the very special case $f(u)=|u|$, with $u_\ast=0$ and $u_+=-u_-$, the earlier
estimates on $\kappa_\pm^\varepsilon$ are exact, so that
\begin{equation*}
	\frac{\kappa_+^\varepsilon(\xi)}{u_-}=1+\frac{e^{-(\ell-\xi)/\varepsilon}}{1-e^{-(\ell-\xi)/\varepsilon}}
	\qquad
	\frac{\kappa_-^\varepsilon(\xi)}{u_-}=1+\frac{e^{-(\ell+\xi)/\varepsilon}}{1-e^{-(\ell+\xi)/\varepsilon}}.
\end{equation*}
In this case, the function $\theta^\varepsilon$ is approximated by 
\begin{equation*}
	\theta^\varepsilon(\xi)\approx 
		\frac{1}{2\varepsilon}\left(\frac{e^{-(\ell+\xi)/\varepsilon}}{1-e^{-(\ell+\xi)/\varepsilon}}
		-\frac{e^{-(\ell-\xi)/\varepsilon}}{1-e^{-(\ell-\xi)/\varepsilon}}\right)
\end{equation*}
which gives $\theta^\varepsilon(\xi)\approx -\varepsilon^{-1}\,e^{-\ell/\varepsilon}\sinh(\xi/\varepsilon)$
in the regime $\varepsilon\to 0^+$.
\end{example}
\vskip.15cm

\begin{example}\label{example:BurgersEq} \rm
For the Burgers equation, $f(u)=u^2/2$, there holds
\begin{equation*}
	\Psi_\ast(\kappa, u)=2\int_{u_\ast}^{u} \frac{ds}{2\kappa-s^2}
		=\frac{\sqrt{2}}{\sqrt{\kappa}}\,\tanh^{-1}\left(\frac{u}{\sqrt{2\kappa}}\right)
\end{equation*}
Given $\xi\in(-\ell,\ell)$, the values $\kappa_\pm^\varepsilon$ can be approximated by
$\tilde \kappa_\pm^\varepsilon$ determined by
\begin{equation*}
	\frac{2\varepsilon}{u_-}
		\,\tanh^{-1}\left(\frac{-u_-}{\sqrt{2\tilde \kappa_+^\varepsilon}}\right)+\ell=\xi,\qquad
	\frac{2\varepsilon}{u_-}
		\,\tanh^{-1}\left(\frac{u_-}{\sqrt{2\tilde \kappa_-^\varepsilon}}\right)-\ell=\xi.
\end{equation*}
obtained by substituting the multiplicative term $\sqrt{2}/\sqrt{\kappa_\pm^\varepsilon}$
with $\sqrt{2}/\sqrt{f(u_\pm)}=2/u_-$.
By computation, we obtain the explicit expressions
\begin{equation*}
	\tilde \kappa_+^\varepsilon=\frac{u_-^2}{2}\,
		\frac{1}{\tanh^2\left\{u_-(\ell-\xi)/2\varepsilon\right\}},
		\qquad
	\tilde \kappa_-^\varepsilon=\frac{u_-^2}{2}\,
		\frac{1}{\tanh^2\left\{u_-(\ell+\xi)/2\varepsilon\right\}}.
\end{equation*}
Since, for $x,y>0$, 
\begin{equation*}
	\begin{aligned}
	\frac{1}{\tanh^2(x/\varepsilon)}-\frac{1}{\tanh^2(y/\varepsilon)}
	&=\frac{4\bigl(e^{(y-x)/\varepsilon}-e^{(x-y)/\varepsilon}\bigr)
		\bigl(e^{(x+y)/\varepsilon}-e^{-(x+y)/\varepsilon}\bigr)}
		{(e^{x/\varepsilon}-e^{-x/\varepsilon})^2(e^{y/\varepsilon}-e^{-y/\varepsilon})^2}\\
	&\approx 4\,\bigl(e^{-2x/\varepsilon}-e^{-2y/\varepsilon}\bigr)
	\end{aligned}
\end{equation*}
as $\varepsilon\to 0^+$, the function $\theta^\varepsilon$ approaches
\begin{equation*}
	\theta^\varepsilon(\xi)\approx 
	\frac{1}{2\varepsilon\,u_-}\left(\tilde \kappa_-^\varepsilon(\xi)-\tilde \kappa_+^\varepsilon(\xi)\right)
	\approx
	\frac{1}{\varepsilon}u_-\bigl(e^{-u_-(\ell+\xi)/\varepsilon}-e^{-u_-(\ell-\xi)/\varepsilon}\bigr)
\end{equation*}
which corresponds to the formula determined in \cite{ReynWard95b}.
\end{example}

\section{Spectral analysis for scalar diffusion-transport operators}\label{Sect:Spec}

Our concern in the present section is to estabilish a precise description on the location of 
the eigenvalues of the linearized operator, in order to show that the general procedure
developed in Section \ref{Sect:Abs} is indeed applicable in the case of scalar conservation laws 
with convex flux.

The problem of determining the limiting structure of the spectrum of the type of second order 
differential operators we deal with has been widely considered in the literature.
Among others, let us quote the approach, based on the use of Pr\"ufer transform, used in 
\cite{CarrPego89}, in the context of metastability analysis for the Allen--Cahn equation.
Here, we prefer to follow the strategy implemented in \cite{KreiKrei86}, for the linearization
at the steady state of the Burgers equation.
In what follows, we show that the same kind of eigenvalues distribution holds in a much more
general situation, the main ingredient being the resemblance of the coefficient $a^\varepsilon$
to a step function $a^0$, jumping from a positive to a negative value, as $\varepsilon\to 0^+$.
\vskip.15cm

Fixed $\varepsilon>0$ and linearizing the scalar conservation law \eqref{cauchyBurg1} 
at a given a reference profile $U^{\varepsilon}=U^{\varepsilon}(x)$, satisfying the boundary 
conditions $U^{\varepsilon}(\pm\ell)=u_\pm$, we end up with the differential linear
diffusion-transport operator
\begin{equation}\label{transpop}
	\mathcal{L}^{\varepsilon}_\xi u:=u''-(a^{\varepsilon}(x) u)'
	\qquad u(\pm \ell)=0,
\end{equation}
where $a^\varepsilon=a^{\varepsilon}(x):=f'(U^{\varepsilon}(x))$.
The aim of this Section is to describe the structure of the spectrum $\sigma(\mathcal{L}^{\varepsilon}_\xi)$
of the operator  $\mathcal{L}^{\varepsilon}_\xi$ for $\varepsilon$ sufficiently small.

Given the function $a^\varepsilon$, let us introduce the self-adjoint operator
\begin{equation*}
	\mathcal{M}^{\varepsilon}_\xi v:=\varepsilon^2\,v''-b^{\varepsilon}v
	\qquad  v(\pm \ell)=0,
\end{equation*}
where
\begin{equation}\label{defb}
	b^{\varepsilon}:=\left(\frac12\,a^{\varepsilon}\right)^2+\frac12\,\varepsilon\,\frac{da^{\varepsilon}}{dx}.
\end{equation}
A straightforward calculation shows that if $u$ is an eigenfunction of \eqref{transpop} relative to the 
eigenvalue $\lambda$, then the function $v(x)$ defined by
\begin{equation*}
	v(x)=\exp \left(-\frac{1}{2\varepsilon} \int_{x_0}^x a^{\varepsilon}(y)\,dy \right)u(x)
\end{equation*}
(with $x_0$ arbitrarily chosen) is an eigenfunction of the operator $\mathcal{M}^{\varepsilon}_\xi$ relative 
to the eigenvalue $\mu:=\varepsilon\lambda$.
Since $\mathcal{M}^\varepsilon_\xi$ is self-adjoint, we can state that the spectrum of the operator
$\mathcal{L}^\varepsilon_\xi$ is composed by \underline{real} eigenvalues.
Moreover, if $u$ is an eigenfunction of \eqref{transpop} relative to the first eigenvalue $\lambda_1^\varepsilon$,
integrating in $(-\ell,\ell)$ the relation $\mathcal{L}^\varepsilon_\xi u=\lambda\,u$, we deduce the identity
\begin{equation*}
	0=\int_{-\ell}^{\ell}\bigl(\mathcal{L}^{\varepsilon}_\xi-\lambda_1^\varepsilon\bigr)u\,dx
		=\varepsilon\,\bigl(u'(\ell)-u'(-\ell)\bigr)-\lambda_1^\varepsilon\int_{-\ell}^{\ell} u(x)\,dx
\end{equation*}
Assuming, without loss of generality, $u$ to be strictly positive in $(-\ell,\ell)$ and normalized so that its 
integral in $(-\ell,\ell)$ is equal to 1, we get
\begin{equation*}
	\lambda_1^\varepsilon=\varepsilon\,\bigl(u'(\ell)-u'(-\ell)\bigr)<0
\end{equation*}
Hence, for any choice of the function $a^\varepsilon$, there holds
\begin{equation*}
	\sigma(\mathcal{L}^{\varepsilon}_\xi)\subset (-\infty,0).
\end{equation*}
Our next aim is to show that under appropriate assumption on the behavior of the family of functions 
$a^\varepsilon$ as $\varepsilon\to 0^+$, it is possible to furnish a detailed representation of the 
eigenvalue distributions for small $\varepsilon$.
Specifically, we are interested in coefficients $a^\varepsilon$ behaving, in the limit $\varepsilon\to 0^+$
as a step function of the form
\begin{equation*}
	a^0(x):=\left\{\begin{aligned}
		&a_-	&\qquad &x\in(-\ell,\xi),\\ &a_+	&\qquad &x\in(\xi,\ell),
			\end{aligned}\right.
\end{equation*}
for some $\xi\in(-\ell,\ell)$ and $a_+<0<a_-$.
We will show that, under appropriate assumptions making precise in which sense $a^\varepsilon$
``resemble'' $a^0$ for $\varepsilon$ small,  the first eigenvalue $\lambda_1^\varepsilon$ turns to be 
``very close'' to $0$ for $\varepsilon$ small, and all of the others eigenavalues $\lambda_k^\varepsilon$, 
with $k\geq 2$,  are such that $\varepsilon\lambda_k^\varepsilon=O(1)$ as $\varepsilon\to 0^+$.

\subsection*{Estimate from below for the first eigenvalue}
We estimate the first eigenvalue $\mu^\varepsilon_1$ of the operator $\mathcal{M}^\varepsilon_\xi$
by means of the inequality
\begin{equation*}
		|\mu_1^\varepsilon|\leq \frac{|\mathcal M^\varepsilon_\xi\,\psi|_{{}_{L^2}}}{|\psi|_{{}_{L^2}}}.
\end{equation*}
for smooth test function $\psi$ such that $\psi(\pm \ell)=0$.
Let us consider as test function $\psi^\varepsilon(x):=\psi_0^\varepsilon(x)-K^\varepsilon(x)$, where
\begin{equation*}
	\begin{aligned}
 		\psi_0^\varepsilon(x)&:=\exp\left(\frac{1}{2\varepsilon}\int_{\xi}^{x}a^\varepsilon(y)\,dy\right),\\
		K^\varepsilon(x)&:=\frac{1}{2\ell}\bigl\{\psi^\varepsilon_0(-\ell)(\ell-x)+\psi^\varepsilon_0(\ell)(\ell+x)\bigr\}.
	\end{aligned}
\end{equation*} 
A direct calculation shows that $\mathcal{M}^\varepsilon_\xi \psi:=b^\varepsilon\,K$ and, assuming the family 
$b^\varepsilon$ to be uniformly bounded, we infer
\begin{equation*}
		|\mu^\varepsilon_1|\leq \frac{|b^\varepsilon\,K^\varepsilon|_{{}_{L^2}}}{|\psi^\varepsilon_0-K^\varepsilon|_{{}_{L^2}}}
			\leq C\,\frac{|K^\varepsilon|_{{}_{L^2}}}{|\psi^\varepsilon_0|_{{}_{L^2}}-|K^\varepsilon|_{{}_{L^2}}}
			= \frac{C}{|K^\varepsilon|_{{}_{L^2}}^{-1}|\psi^\varepsilon_0|_{{}_{L^2}}-1}
\end{equation*}
as soon as $|\psi^\varepsilon_0|_{{}_{L^2}}>|K^\varepsilon|_{{}_{L^2}}$.

The opposite case being similar, let us assume $\psi_0(-\ell)\geq \psi_0(\ell)$. 
From the definition of $K^\varepsilon$, it follows
\begin{equation*}
	|K^\varepsilon|_{{}_{L^2}}^2
		=\frac{2\ell}{3}\bigl\{\psi_0^2(\ell)+\psi_0(\ell)\psi_0(-\ell)+\psi_0^2(-\ell)\bigr\}
		\leq 2\ell\,\psi_0^2(-\ell).
\end{equation*}
Therefore, we deduce
\begin{equation*}
	|K^\varepsilon|_{{}_{L^2}}^{-2}|\psi^\varepsilon_0|_{{}_{L^2}}^2
		\geq 2\ell\,\psi_0^{-2}(-\ell)\,\int_{-\ell}^{\ell}  |\psi^\varepsilon_0(x)|^2\,dx
		=2\ell\,I^\varepsilon
\end{equation*}
where 
\begin{equation*}
	I^\varepsilon:=\int_{-\ell}^{\ell}  \exp\left(\frac{1}{\varepsilon}\int_{-\ell}^{x}a^\varepsilon(y)\,dy\right)\,dx
\end{equation*}
Since $a^\varepsilon$ converges to the step function $a^0$ as $\varepsilon\to 0^+$,
it is natural to approximate the latter integral in term of the corresponding one for $a^0$:
\begin{equation*}
	I^\varepsilon =\int_{-\ell}^{\ell}  \exp\left(\frac{1}{\varepsilon}\int_{-\ell}^{x}(a^\varepsilon-a^0)(y)\,dy\right)
			\exp\left(\frac{1}{\varepsilon}\int_{-\ell}^{x}a^0(y)\,dy\right)\,dx
	\geq  e^{-|a^\varepsilon-a^0|_{{}_{L^1}}/\varepsilon}\,I^0.
\end{equation*}
Since, for $\varepsilon$ small,
\begin{equation*}
	\begin{aligned}
	I^0&=\int_{-\ell}^{\xi} e^{a_-(x+\ell)/\varepsilon}\,dx
		+e^{a_-(\xi+\ell)/\varepsilon}\int_{\xi}^{\ell} e^{a_+(x-\xi)/\varepsilon}\,dx\\
	&=\varepsilon\,e^{a_-(\xi+\ell)/\varepsilon}\Bigl\{ \frac{1}{a_-}\bigl(1-e^{-a_-(\xi+\ell)/\varepsilon}\bigr)
		-\frac{1}{a_+}\,\bigl(1-e^{a_+(\ell-\xi)/\varepsilon}\bigr)\Bigr\} 
		\sim \frac{[a]}{a_-a_+}\,\varepsilon\,e^{a_-(\xi+\ell)/\varepsilon}.
	\end{aligned}
\end{equation*}
the subsequent estimate holds
\begin{equation*}
	|K^\varepsilon|_{{}_{L^2}}^{-2}|\psi^\varepsilon_0|_{{}_{L^2}}^2
		\geq  2\,\ell\,e^{-|a^\varepsilon-a^0|_{{}_{L^1}}/\varepsilon}\,I^0
		\geq C_1\,e^{C_2/\varepsilon}.
\end{equation*}
whenever $|a^\varepsilon-a^0|_{{}_{L^1}}\leq c_0\varepsilon$ for some $c_0>0$.
Thus, we deduce for the first eigenvalue $\mu^\varepsilon_1$ of the self-adjoint operator $\mathcal{M}^\varepsilon_\xi$
the estimate $|\mu^\varepsilon_1|\leq C_1\,e^{C_2/\varepsilon}$ for some positive constant $C_1, C_2$.
As a consequence,  since the spectrum $\sigma(\mathcal{L}^\varepsilon_\xi)$ coincides with 
$\varepsilon^{-1}\sigma(\mathcal{M}^\varepsilon_\xi)$, the next result holds.

\begin{proposition}\label{prop:firstevalue}
Let $a^\varepsilon$ be a family of functions satisfying the assumption:\\
\indent {\sf A0.} there exists $C>0$, indipendent on $\varepsilon>0$, such that
\begin{equation*}
	|a^\varepsilon|_{{}_{\infty}}+\varepsilon\left| \dfrac{da^\varepsilon}{dx}\right|_{{}_{\infty}}\leq C
\end{equation*}
If there exists $\xi\in(-\ell,\ell)$, $a_+<0<a_-$ and $C>0$ for which $|a^\varepsilon-a^0|_{{}_{L^1}}\leq C\varepsilon$,
then there exist constants $C,c>0$ such that  $-C\,e^{-c/\varepsilon}\leq \lambda^\varepsilon_1<0$.
\end{proposition}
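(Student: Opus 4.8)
The plan is to assemble the estimates carried out in the subsection preceding the statement, checking that hypothesis \textsf{A0} and the $L^1$-closeness $|a^\varepsilon-a^0|_{{}_{L^1}}\le C\varepsilon$ supply exactly the ingredients used there. First I would recall the Liouville-type conjugation $v=\exp\bigl(-\tfrac{1}{2\varepsilon}\int_{x_0}^x a^\varepsilon\bigr)\,u$, which turns $\mathcal{L}^\varepsilon_\xi$ into the \emph{self-adjoint} operator $\mathcal{M}^\varepsilon_\xi v=\varepsilon^2 v''-b^\varepsilon v$ with Dirichlet data and rescaled spectral parameter $\mu=\varepsilon\lambda$, so that $\sigma(\mathcal{L}^\varepsilon_\xi)=\varepsilon^{-1}\sigma(\mathcal{M}^\varepsilon_\xi)$; in particular all eigenvalues are real. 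Integrating $\mathcal{L}^\varepsilon_\xi u=\lambda_1^\varepsilon u$ over $I$ against the positive first eigenfunction (which vanishes at $\pm\ell$) yields $\lambda_1^\varepsilon=\varepsilon\bigl(u'(\ell)-u'(-\ell)\bigr)<0$, which already gives the upper bound and shows $\mathcal{M}^\varepsilon_\xi$ is negative definite.

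For the lower bound it suffices to prove $|\mu_1^\varepsilon|\le C e^{-c/\varepsilon}$, since then $|\lambda_1^\varepsilon|=\varepsilon^{-1}|\mu_1^\varepsilon|\le C\varepsilon^{-1}e^{-c/\varepsilon}$ and the algebraic prefactor is absorbed by slightly decreasing $c$. Being self-adjoint and negative definite, $\mathcal{M}^\varepsilon_\xi$ has $|\mu_1^\varepsilon|\le |\mathcal{M}^\varepsilon_\xi\psi|_{{}_{L^2}}/|\psi|_{{}_{L^2}}$ for every admissible $\psi$ (on the spectrum $\mu^2\ge(\mu_1^\varepsilon)^2$). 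I would test with $\psi^\varepsilon:=\psi_0^\varepsilon-K^\varepsilon$, where $\psi_0^\varepsilon=\exp\bigl(\tfrac{1}{2\varepsilon}\int_\xi^x a^\varepsilon\bigr)$ is the formal zero mode of $\mathcal{M}^\varepsilon_\xi$ and $K^\varepsilon$ its affine interpolant between the endpoint values; then $\psi^\varepsilon(\pm\ell)=0$ and, since $(K^\varepsilon)''=0$ and $\mathcal{M}^\varepsilon_\xi\psi_0^\varepsilon=0$, we get $\mathcal{M}^\varepsilon_\xi\psi^\varepsilon=b^\varepsilon K^\varepsilon$. Here \textsf{A0} enters: it forces $b^\varepsilon=\bigl(\tfrac12 a^\varepsilon\bigr)^2+\tfrac12\varepsilon\,da^\varepsilon/dx$ to be bounded in $L^\infty$ uniformly in $\varepsilon$, whence $|\mu_1^\varepsilon|\le C\,|K^\varepsilon|_{{}_{L^2}}\bigl(|\psi_0^\varepsilon|_{{}_{L^2}}-|K^\varepsilon|_{{}_{L^2}}\bigr)^{-1}$ as soon as the denominator is positive.

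The core of the argument is then the lower bound $|\psi_0^\varepsilon|_{{}_{L^2}}^2/|K^\varepsilon|_{{}_{L^2}}^2\ge C_1 e^{C_2/\varepsilon}$. Taking without loss of generality $\psi_0^\varepsilon(-\ell)\ge\psi_0^\varepsilon(\ell)$, the explicit value of $|K^\varepsilon|_{{}_{L^2}}^2$ (a quadratic form in the two endpoint values) is bounded above by $2\ell\,\psi_0^\varepsilon(-\ell)^2$, so the ratio dominates $(2\ell)^{-1}I^\varepsilon$ with $I^\varepsilon=\int_{-\ell}^{\ell}\exp\bigl(\tfrac1\varepsilon\int_{-\ell}^x a^\varepsilon\bigr)\,dx$. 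Splitting $a^\varepsilon=(a^\varepsilon-a^0)+a^0$ in the exponent and using $|a^\varepsilon-a^0|_{{}_{L^1}}\le C\varepsilon$ gives $I^\varepsilon\ge e^{-C}I^0$, and the step-function integral $I^0$, computed in closed form, is asymptotic to $\tfrac{\ldbrack a\rdbrack}{a_-a_+}\,\varepsilon\,e^{a_-(\xi+\ell)/\varepsilon}$, a quantity growing like $e^{c/\varepsilon}$ because $a_->0$, $\xi+\ell>0$ and $\ldbrack a\rdbrack/(a_-a_+)>0$ (as $a_+<0<a_-$). Chaining the inequalities gives $|\mu_1^\varepsilon|\le Ce^{-c/\varepsilon}$, hence $-Ce^{-c/\varepsilon}\le\lambda_1^\varepsilon<0$.

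I expect the only real obstacle to be this last chain of estimates: making sure each hypothesis is used in the correct place (\textsf{A0} to bound $b^\varepsilon$, the $L^1$-closeness to pass from $I^\varepsilon$ to $I^0$), that the exponent $c$ that survives at the end is strictly positive and can be taken independent of $\varepsilon$ (and, if needed, uniform on compact $\xi$-subsets of $J$ rather than just for a fixed $\xi$), and that the reverse-triangle step is legitimate — which it is, since the ratio estimate forces $|\psi_0^\varepsilon|_{{}_{L^2}}>|K^\varepsilon|_{{}_{L^2}}$ for $\varepsilon$ small. Everything else is bookkeeping already displayed in the text above.
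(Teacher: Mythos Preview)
Your proposal is correct and follows essentially the same argument as the paper: the Liouville conjugation to the self-adjoint operator $\mathcal{M}^\varepsilon_\xi$, the sign of $\lambda_1^\varepsilon$ via integration, the Rayleigh-type bound tested against $\psi^\varepsilon=\psi_0^\varepsilon-K^\varepsilon$, the use of \textsf{A0} to bound $b^\varepsilon$, and the reduction from $I^\varepsilon$ to the explicit $I^0$ through the $L^1$-closeness hypothesis are exactly the steps the paper carries out. Your bookkeeping is in fact slightly cleaner (e.g.\ the factor $(2\ell)^{-1}$ in front of $I^\varepsilon$ and the remark that the $\varepsilon^{-1}$ prefactor is absorbed into the exponential).
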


Let us stress that the request  $a_+<0<a_-$ is essential, even if hided in the proof.
If this is not the case, the term $K^\varepsilon$ would not be small as $\varepsilon\to 0^+$
and its $L^2$ norm would not be bounded by the $L^2$-norm of $\psi_0^\varepsilon$.
In fact, the statement in Proposition \ref{prop:firstevalue} may not hold when $a_\pm$ have the
same sign, the easiest example being the case $a^\varepsilon\equiv a_+=a_->0$.

The next Example gives an heuristic estimate for the first eigenvalue $\lambda_1^\varepsilon$.

\begin{example} \rm{
Given $-\alpha<0<\beta$ and $a_\pm\in\mathbb{R}$, let us set $I=(-\alpha,\beta)$, $[a]:=a_+-a_-$ and
\begin{equation*}
	a(x)=a_-\chi_{{}_{(-\alpha,0)}}(x)+a_+\chi_{{}_{(0,\beta)}}(x).
\end{equation*}
Given $\lambda>0$, let us look for functions $u\in C(I)$, such that
\begin{equation*}
	({\mathcal L}-\lambda)u=\varepsilon\,u''-\left(a(x)\,u\right)'-\lambda u=0,\qquad
	u(-\alpha)=u(\beta)=0
\end{equation*}
in the sense of distributions.
Since $a'=[a]\,\delta_{0}$, this amounts in finding two functions
$u^\pm$ such that
\begin{equation*}
	({\mathcal L}_\pm-\lambda) u
		=\varepsilon\,u_\pm''-a_\pm\,u'_\pm+\lambda\,u=0,\qquad
		u_-(-\alpha)=u_+(\beta)=0
\end{equation*}
and the following transmission conditions are satisfied
\begin{equation*}
	u_+(0)-u_-(0)=0\qquad\textrm{and}\qquad
	\varepsilon\,\bigl(u'_+(0)-u'_-(0)\bigr)-[a]\,u_\pm(0)=0.
\end{equation*}
The characteristic polinomial of ${\mathcal L}_\pm$ is 
$p_\pm(\mu;\lambda):=\varepsilon\,\mu^2-a_\pm\,\mu-\lambda$,
with roots
\begin{equation*}
	\mu_-^\pm:=\frac{a_- \pm \Delta_-}{2\varepsilon},
		\qquad
	\mu_+^\pm:=\frac{a_+ \pm \Delta_+}{2\varepsilon},
		\qquad\textrm{where }
	\Delta_\pm:=\sqrt{a_\pm^2+4\,\varepsilon\,\lambda}.
\end{equation*}
Assume $\lambda>-(a_\pm)^2/4\,\varepsilon$.
Choosing $u_\pm$ in the form
\begin{equation*}
	u_-(x)=A_-(e^{\mu_-^+(\alpha+x)}-e^{\mu_-^-(\alpha+x)})
		\quad\textrm{and}\quad
	u_+(x)=A_+(e^{-\mu_+^+(\beta-x)}-e^{-\mu_+^-(\beta-x)}).
\end{equation*}
Setting $\theta_-^\pm:=e^{\mu_-^\pm\alpha}$ and 
$\theta_+^\pm:=e^{-\mu_+^\pm\beta}$, there holds
\begin{equation*}
	\begin{aligned}
	u_-(0)&=A_-(\theta_-^+-\theta_-^-)
		& \qquad u_-'(0)&=A_-(\mu_-^+\theta_-^+-\mu_-^-\theta_-^-)\\
	u_+(0)&=A_+(\theta_+^+-\theta_+^-)
		& \qquad u_+'(0)&=A_+(\mu_+^+\theta_+^+-\mu_+^-\theta_+^-).
	\end{aligned}
\end{equation*}
Therefore, the transmission conditions take the form of a linear system in $A_\pm$
\begin{equation*}
	\left\{\begin{aligned}
		&(\theta_+^+-\theta_+^-)A_+-(\theta_-^+-\theta_-^-)A_-=0,\\
		&\Bigl\{ \left(2\varepsilon\,\mu_+^+-[a]\right)\theta_+^+
			-\left(2\varepsilon\,\mu_+^--[a]\right)\theta_+^-\Bigr\}A_+\\
		&\qquad 
			+\Bigl\{-\left(2\varepsilon\,\mu_-^++[a]\right)\theta_-^+
			+\left(2\varepsilon\,\mu_-^-+[a]\right)\theta_-^-\Bigr\}A_-=0.
	\end{aligned}\right.
\end{equation*}
After some manipulations, the determinant $D=D(\lambda,\varepsilon)$ 
of system can be written as
\begin{equation*}
	D=-\left([a]-[\Delta]\right)\theta_-^+\theta_+^++\left([a]+\{\Delta\}\right)\theta_-^+\theta_+^-
		+\left([a]-\{\Delta\}\right)\theta_-^-\theta_+^+-\left([a]+[\Delta]\right)\theta_-^-\theta_+^-,
\end{equation*}
where $[\Delta]:=\Delta_+-\Delta_-$  and $\{\Delta\}:=\Delta_++\Delta_-$.

Since $\sqrt{\kappa^2+4\,x}=|\kappa|+2|\kappa|^{-1}\,x+o(x)$, 
in the case $a_+<0<a_-$ there hold
\begin{equation*}
	\begin{aligned}
	\{\Delta\}&=\sqrt{a_+^2+4\,\varepsilon\lambda}+\sqrt{a_-^2+4\,\varepsilon\lambda}
		=-[a]\left(1-\frac{2\,\varepsilon\lambda}{a_+\,a_-}\right)+o(\varepsilon\lambda)\\
	[\Delta]&=\sqrt{a_+^2+4\,\varepsilon\,\lambda}-\sqrt{a_-^2+4\,\varepsilon\,\lambda}
		=-\{a\}\left(1+\frac{2\,\varepsilon\lambda}{a_+ a_-}\right)
			+o(\varepsilon\lambda)
	\end{aligned}
\end{equation*}
as $\varepsilon\lambda\to 0$, together with
\begin{equation*}
	\begin{aligned}
	&\varepsilon\ln(\theta_-^+\theta_+^+)
		=\frac{1}{2}\bigl\{(a_-+\Delta_-)\alpha-(a_++\Delta_+)\beta\bigr\}
		= a_-\alpha +\left(\frac{\alpha}{a_-}+\frac{\beta}{a_+}\right)\varepsilon\lambda
			+o(\varepsilon\lambda),\\
	&\varepsilon\ln(\theta_-^+\theta_+^-)
		=\frac{1}{2}\bigl\{(a_-+\Delta_-)\alpha-(a_+-\Delta_+)\beta\bigr\}
		=a_-\alpha-a_+\beta+\left(\frac{\alpha}{a_-}-\frac{\beta}{a_+}\right)\varepsilon\lambda
			+o(\varepsilon\lambda),\\
	&\varepsilon\ln(\theta_-^-\theta_+^+)
		=\frac{1}{2}\bigl\{(a_--\Delta_-)\alpha-(a_++\Delta_+)\beta\bigr\}
		=-\left(\frac{\alpha}{a_-}-\frac{\beta}{a_+}\right)\varepsilon\lambda^\varepsilon
			+o(\varepsilon\lambda),\\
	&\varepsilon\ln(\theta_-^-\theta_+^-)
		=\frac{1}{2}\bigl\{(a_--\Delta_-)\alpha-(a_+-\Delta_+)\beta\bigr\}
		=-a_+\beta
		-\left(\frac{\alpha}{a_-}+\frac{\beta}{a_+}\right)\varepsilon\lambda^\varepsilon
			+o(\varepsilon\lambda)
	\end{aligned}
\end{equation*}
Hence, for $\lambda<0$ and $\varepsilon\lambda\to 0$,
disregarding the exponentially small term $\theta_-^-\theta_+^+$ 
keeping only the principal term in the expansions, we infer
\begin{equation*}
	\frac{1}{2}D\approx
		-a_+ e^{a_-\alpha/\varepsilon}
		+\frac{[a]\,\varepsilon\lambda}{a_+\,a_-}\,e^{(a_-\alpha-a_+\beta)/\varepsilon}
		+a_- e^{-a_+\beta/\varepsilon}.
\end{equation*}
Therefore, $D\approx 0$ for
\begin{equation}\label{firstapprox}
	\lambda_1^{\varepsilon} \approx -
	\frac{a_+a_-}{a_+-a_-}\frac{1}{\varepsilon}
		\left(-a_+ e^{a_+\beta/\varepsilon}+a_- e^{-a_-\alpha/\varepsilon}\right)
\end{equation}
in the regime $\varepsilon\lambda$ small.}
\end{example}

Asymptotic representation \eqref{firstapprox} permits to verify the relation between the
first eigenvalue of the linearized operator and the term $\Omega^\varepsilon$,
controlling the size of $\mathcal{F}[U^\varepsilon]$ (see \eqref{defOmegaeps}).
Specifically, for the Burgers equation, \eqref{firstapprox} becomes
\begin{equation*}
	\lambda \approx 
	-\frac{1}{\varepsilon}\,u_-^2e^{-u_-\ell/\varepsilon}\,\cosh(u_-\xi/\varepsilon).
\end{equation*}
The term $\mathcal{F}[U^\varepsilon]$ given in \eqref{FUeps}
for the Burgers equation (Example \ref{example:BurgersEq}) is such that
\begin{equation*}
	\Omega^\varepsilon(\xi)\approx \frac{2}{\varepsilon}u_-^2
		\left|e^{-u_-(\ell+\xi)/\varepsilon}-e^{-u_-(\ell-\xi)/\varepsilon}\right|
	=\frac{4}{\varepsilon}u_-^2|\sinh(u_-\,\xi/\varepsilon)|\,e^{-u_- \ell/\varepsilon}.
\end{equation*}
Therefore, the estimate
\begin{equation*}
	0\leq \frac{\Omega^\varepsilon}{|\lambda^\varepsilon|}\approx 4|\tanh(u_-\,\xi/\varepsilon)|\leq 4.
\end{equation*}
holds and hypothesis \eqref{meta} is verified.
\vskip.15cm

For general scalar conservation it still possible to obtain an analogous bound.
Indeed, for $a_\pm=f'(u_\pm)$, $\alpha=\ell+\xi$ and $\beta=\ell-\xi$, expression
\eqref{firstapprox} becomes
\begin{equation*}
	\lambda_1^{\varepsilon} \approx -\left(\frac{1}{f'(u_-)}-\frac{1}{f'(u_+)}\right)^{-1}
	\frac{1}{\varepsilon}\left(-f'(u_+) e^{f'(u_+)(\ell-\xi)/\varepsilon}
		+f'(u_-) e^{-f'(u_-)(\ell+\xi)/\varepsilon}\right).
\end{equation*}
(compare with Lemma 3.2 in \cite{deGrKara98}).
The bounded for $\Omega^\varepsilon$ can be obtained by proceeding as in Section \ref{Sect:Abs},
by means of a more detailed estimate on the functions $\kappa_\pm^{\varepsilon}$ starting from 
the inequalities 
\begin{equation*}
	\begin{aligned}
		f(u)&\leq f(u_+)+f'(u_+)(u-u_+)+\frac{1}{2}c_0(u-u_+)^2	&\qquad	&u\in[u_+,u_\ast],\\
		f(u)&\leq f(u_-)+f'(u_-)(u-u_-)+\frac{1}{2}c_0(u-u_-)^2		&\qquad	&u\in[u_\ast,u_-],
	\end{aligned}
\end{equation*}
A careful (and tedious) computation of the integrals in a the corresponding approximated form 
for the implicit relation \eqref{defkappapm}, leads to the bound
\begin{equation*}
	\Omega^{\varepsilon} \leq \frac{1}{\varepsilon}\left(C_+e^{f'(u_+)(\ell-\xi)/\varepsilon}
		+C_- e^{-f'(u_-)(\ell+\xi)/\varepsilon}\right)
\end{equation*}
which, together with the asymptotic representation for $\lambda_1^\varepsilon$,
guarantees requirement \eqref{meta} in Theorem \ref{thm:metaL}.
\vskip.15cm

\subsection*{Estimate from above for the second eigenvalue.}
Controlling the location of the second (and subsequent) eigenvalue needs much more care
and, also, a number of additional assumption on the limiting behavior of the function $a^\varepsilon$
as $\varepsilon\to 0^+$.
Precisely, we suppose $a^\varepsilon\in C^0([-\ell,\ell])$ satisfies the following hypotheses:\\
\indent {\sf A1.} the function $a^\varepsilon$ is twice differentiable at any $x\neq \xi$ and
\begin{equation*}
	\frac{da^{\varepsilon}}{dx}, \frac{d^2a^{\varepsilon}}{dx^2}<0<a^{\varepsilon}
		\quad\textrm{in}\;(-\ell,\xi),
	\qquad\textrm{and}\qquad
	a^{\varepsilon},\frac{da^{\varepsilon}}{dx}<0<\frac{d^2a^{\varepsilon}}{dx^2}
		\quad\textrm{in}\;(\xi,\ell),
\end{equation*}
\indent {\sf A2.} for any $C>0$ there exists $c_{{}_{0}}>0$ such that, for any $x$
satisfying $|x-\xi|\geq c_{{}_{0}}\varepsilon$, there holds
\begin{equation*}
	|a^\varepsilon-a^0|\leq C\,\varepsilon
		\qquad\textrm{and}\qquad
	\varepsilon\left|\frac{da^\varepsilon}{dx}\right|\leq C;
\end{equation*}
\indent{\sf A3.} there exists the left/right first order derivatives of $a^\varepsilon$ at $\xi$ and 
\begin{equation*}
	\liminf_{\varepsilon\to 0^+} \,\varepsilon\left|\frac{da^\varepsilon}{dx}(\xi\pm)\right|>0
\end{equation*}
As a consequence, the function $b^\varepsilon+\varepsilon\lambda^\varepsilon$ satisfies 
a number of corresponding properties, listed in the next statement.

\begin{lemma}\label{lem:propb} 
Let the family $a^\varepsilon$ be such that hypotheses {\sf A1-2-3} are satisfied,
and let $\lambda^\varepsilon<0$ be such that
\begin{equation*}
	\inf_{\varepsilon>0}\varepsilon\lambda^\varepsilon>-\frac{1}{4}\,\alpha_{{}_{0}}^2
	\qquad\textrm{where }\; \alpha_{{}_{0}}:=\min\{|a_-|,|a_+|\}.
\end{equation*}
Then there exist $\varepsilon_{{}_{0}}>0$ such that, for $\varepsilon<\varepsilon_{{}_{0}}$,
the functions $b^\varepsilon+\varepsilon\lambda^\varepsilon$, with $b^\varepsilon$ defined in \eqref{defb},
enjoy the following properties:\\
\indent{\sf B1.} the function $b^\varepsilon+\varepsilon\lambda^\varepsilon$ is decreasing in $(-\ell,\xi)$
and increasing in $(\xi,\ell)$;\\
\indent{\sf B2.} there exist $C, c>0$ such that, for any $x$ with $|x-\xi|\geq c\,\varepsilon$
there holds $b^\varepsilon+\varepsilon\lambda^\varepsilon\geq C>0$;\\
\indent{\sf B3.} there exist the left/right limits of $b^\varepsilon+\varepsilon\lambda^\varepsilon$ at $\xi$ and 
\begin{equation*}
	\beta:=\limsup\limits_{\varepsilon\to 0^+} \bigl(b^\varepsilon(\xi\pm)+\varepsilon\lambda^\varepsilon\bigr)<0;
\end{equation*}
\end{lemma}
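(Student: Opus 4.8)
The plan is to check the three items \textsf{B1}, \textsf{B2}, \textsf{B3} separately, each being a short sign or size computation once the hypotheses \textsf{A1}--\textsf{A3} are unwound; since $\varepsilon\lambda^\varepsilon$ does not depend on $x$, it is irrelevant for the monotonicity claim and enters only through its size in \textsf{B2} and \textsf{B3}. For \textsf{B1}, I would differentiate $b^\varepsilon=\tfrac14(a^\varepsilon)^2+\tfrac12\,\varepsilon\,\tfrac{da^\varepsilon}{dx}$ on each of $(-\ell,\xi)$ and $(\xi,\ell)$, where $a^\varepsilon$ is twice differentiable by \textsf{A1}, getting $\tfrac{db^\varepsilon}{dx}=\tfrac12\,a^\varepsilon\,\tfrac{da^\varepsilon}{dx}+\tfrac12\,\varepsilon\,\tfrac{d^2a^\varepsilon}{dx^2}$. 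On $(-\ell,\xi)$ the signs $a^\varepsilon>0$, $\tfrac{da^\varepsilon}{dx}<0$, $\tfrac{d^2a^\varepsilon}{dx^2}<0$ make both summands negative, so $b^\varepsilon$ (hence $b^\varepsilon+\varepsilon\lambda^\varepsilon$) is decreasing; on $(\xi,\ell)$ the signs $a^\varepsilon<0$, $\tfrac{da^\varepsilon}{dx}<0$, $\tfrac{d^2a^\varepsilon}{dx^2}>0$ make both summands positive, so it is increasing. This holds for every $\varepsilon>0$, with no threshold needed.

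For \textsf{B2}, let $\delta:=\inf_{\varepsilon>0}\varepsilon\lambda^\varepsilon+\tfrac14\,\alpha_{{}_{0}}^2$, which is \emph{strictly} positive by hypothesis, so $\varepsilon\lambda^\varepsilon\ge-\tfrac14\,\alpha_{{}_{0}}^2+\delta$ for all $\varepsilon$. Applying \textsf{A2} with its free constant set to $\delta/2$ yields $c:=c_{{}_{0}}(\delta/2)$ such that, whenever $|x-\xi|\ge c\,\varepsilon$, one has $|a^\varepsilon-a^0|\le\tfrac{\delta}{2}\varepsilon$ and $\varepsilon\bigl|\tfrac{da^\varepsilon}{dx}\bigr|\le\tfrac{\delta}{2}$. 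Since $a^0(x)\in\{a_-,a_+\}$ and $\min\{|a_-|,|a_+|\}=\alpha_{{}_{0}}$, the first bound forces $|a^\varepsilon(x)|\ge\alpha_{{}_{0}}-\tfrac{\delta}{2}\varepsilon$, hence $(a^\varepsilon(x))^2\ge\alpha_{{}_{0}}^2-C\varepsilon$ for a fixed $C$; combining with the bound on $\varepsilon\,\tfrac{da^\varepsilon}{dx}$,
\[
 b^\varepsilon(x)+\varepsilon\lambda^\varepsilon\ \ge\ \tfrac14\alpha_{{}_{0}}^2-\tfrac{C}{4}\varepsilon-\tfrac{\delta}{4}-\tfrac14\alpha_{{}_{0}}^2+\delta\ =\ \tfrac{3\delta}{4}-\tfrac{C}{4}\varepsilon ,
\]
which exceeds $\delta/2$ once $\varepsilon$ is below a threshold $\varepsilon_{{}_{0}}$. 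This is the step that uses the strict inequality $\inf_{\varepsilon>0}\varepsilon\lambda^\varepsilon>-\tfrac14\alpha_{{}_{0}}^2$ in an essential way, and it is the only point where the interplay of the constants and the choice of $\varepsilon_{{}_{0}}$ needs a little care, so I would regard it as the (modest) main obstacle.

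For \textsf{B3}, the decisive preliminary observation is that \textsf{A1} already forces $a^\varepsilon(\xi)=0$: by continuity of $a^\varepsilon$, the value $a^\varepsilon(\xi)=\lim_{x\to\xi^-}a^\varepsilon(x)$ is a limit of positive numbers, hence $\ge0$, while $a^\varepsilon(\xi)=\lim_{x\to\xi^+}a^\varepsilon(x)$ is a limit of negative numbers, hence $\le0$. Therefore $(a^\varepsilon)^2\to0$ as $x\to\xi\pm$, and, using \textsf{A3} — which gives that $\tfrac{da^\varepsilon}{dx}$ admits one-sided limits $\tfrac{da^\varepsilon}{dx}(\xi\pm)$, necessarily $\le0$ by \textsf{A1}, with $\liminf_{\varepsilon\to0^+}\varepsilon\bigl|\tfrac{da^\varepsilon}{dx}(\xi\pm)\bigr|=:2\gamma>0$ — the one-sided limits of $b^\varepsilon+\varepsilon\lambda^\varepsilon$ at $\xi$ exist and satisfy
\[
 b^\varepsilon(\xi\pm)+\varepsilon\lambda^\varepsilon\ =\ \tfrac12\,\varepsilon\,\tfrac{da^\varepsilon}{dx}(\xi\pm)+\varepsilon\lambda^\varepsilon\ \le\ -\gamma
\]
for all $\varepsilon$ small, since $\varepsilon\lambda^\varepsilon<0$. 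Passing to the $\limsup$ gives $\beta\le-\gamma<0$. Finally, a single $\varepsilon_{{}_{0}}$ serves the whole statement, since \textsf{B1} holds for all $\varepsilon$ and \textsf{B2}, \textsf{B3} each need only $\varepsilon$ below an explicit threshold.
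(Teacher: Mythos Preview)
Your proof is correct and follows essentially the same approach as the paper's: for \textsf{B1} you compute $\tfrac{db^\varepsilon}{dx}$ and read off the sign from \textsf{A1}; for \textsf{B2} you invoke \textsf{A2} with a small constant to bound $(a^\varepsilon)^2$ from below and $\varepsilon\,|da^\varepsilon/dx|$ from above, then use the strict gap $\inf\varepsilon\lambda^\varepsilon>-\tfrac14\alpha_0^2$; for \textsf{B3} you use $a^\varepsilon(\xi)=0$ together with $\lambda^\varepsilon<0$ and \textsf{A3}. Your write-up is in fact slightly more explicit than the paper's in tracking the constants in \textsf{B2} (introducing $\delta$) and in justifying $a^\varepsilon(\xi)=0$ in \textsf{B3}, but the argument is the same.
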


\begin{proof}
Property {\sf B1.} is an immediate consequence of assumption {\sf A1}, since
\begin{equation*}
	\frac{d}{dx}\left(b^{\varepsilon}+\varepsilon\lambda^\varepsilon\right)
		=\frac14\,a^{\varepsilon}\,\frac{da^{\varepsilon}}{dx}
		+\frac12\,\varepsilon\,\frac{d^2a^{\varepsilon}}{dx^2}.
\end{equation*}
From {\sf A2}, given $C>0$, for $x\leq \xi-c_{{}_{0}}\,\varepsilon$, there holds
\begin{equation*}
	\begin{aligned}
	b^{\varepsilon}+\varepsilon\lambda^\varepsilon
		&\geq \frac{1}{4}(a^\varepsilon+a^0)(a^\varepsilon-a^0)-\frac12\,\varepsilon\left|\frac{da^\varepsilon}{dx}\right|
	+\varepsilon\lambda^\varepsilon+\frac{1}{4}a_-^2\\
		&\geq \varepsilon\lambda^\varepsilon+\frac{1}{4}\alpha_{{}_{0}}^2
			-\frac{1}{2}\left(1+|a^0|\,\varepsilon+\frac12\,C\,\varepsilon^2\right)\,C
	\end{aligned}
\end{equation*}
From such inequality, by choosing $C>0$ sufficiently small, and combining with an analogous estimate 
on $(\xi+c\,\varepsilon,\ell)$, property {\sf B2.} follows.

For what concerns {\sf B3}, we observe that, since $a(\xi)=0$ and $\lambda\leq 0$, there holds 
\begin{equation*}
	\limsup\limits_{\varepsilon\to 0^+} \bigl(b^\varepsilon(\xi\pm)+\varepsilon\lambda^\varepsilon\bigr)
		\leq \limsup\limits_{\varepsilon\to 0^+} \dfrac12\,\varepsilon\,\dfrac{da^{\varepsilon}}{dx}(\xi)
		=-\liminf_{\varepsilon\to 0^+} \,\varepsilon\left|\frac{da^\varepsilon}{dx}(\xi\pm)\right|<0,
\end{equation*}
thanks to {\sf A3}.
\end{proof}

For later reference, we denote $y^\varepsilon_\pm$ the zeros of 
$b^\varepsilon+\varepsilon\lambda^\varepsilon$, 
with   $-\ell<y^\varepsilon_-<\xi<y^\varepsilon_+<\ell$. 
Since property {\sf B2} holds, we deduce that $|y^\varepsilon_\pm-\xi|\leq c_{{}_{0}}\,\varepsilon$.
\vskip.15cm

Assume the assumption of Lemma \ref{lem:propb} to hold, and let $\lambda_2^\varepsilon$ and 
$\mu_2^\varepsilon=\varepsilon\,\lambda_2^\varepsilon$ be the second eigenvalue 
of the operators ${\mathcal L}^\varepsilon_\xi$ and ${\mathcal M}^\varepsilon_\xi$, respectively, with corresponding
eigenfunctions $\phi_2^\varepsilon$ and $\psi_2^\varepsilon$.
Such eigenfunctions are linked together by the relation
\begin{equation}\label{relpsiphi}
	\psi_2^\varepsilon(x)
	=A\,\exp \left(-\frac{1}{2\varepsilon} \int_{x_\ast}^x a^{\varepsilon}(y)\,dy \right)\phi_2^\varepsilon(x)
\end{equation}
for some constants $A$ and $x_\ast$.
Since $\lambda_2^\varepsilon$ is the second eigenvalue, the functions $\phi_2^\varepsilon$ and 
$\psi_2^\varepsilon$ possess a single root located at some point $x_0^\varepsilon\in(-\ell,\ell)$.
The sign properties of $b^\varepsilon+\mu_2^\varepsilon$ described in Lemma \ref{lem:propb} 
imply that $x_0^\varepsilon\in(y_-^\varepsilon, y_+^\varepsilon)$.
Then, $\phi_2^\varepsilon$ and $\psi_2^\varepsilon$ restricted to the intervals $(-\ell,x_0^\varepsilon)$ 
and $(x_0^\varepsilon,\ell)$ are eigenfunctions relative to the first eigenvalue of the same operator 
considered in the corresponding  intervals and  with Dirichlet boundary conditions.

From now on, we drop, for shortness, the dependence on $\varepsilon$ of $\lambda_2, \phi_2, \psi_2, x_0$,
we assume, without loss of generality, $x_0\geq \xi$ and we restrict our attention  to the interval $J=(x_0,\ell)$.
Integrating on $J$, we deduce
\begin{equation*}
	 \lambda_2\,\int_{x_0}^{\ell} \phi_2\,dx=\varepsilon\,\bigl(\phi_2'(\ell)-\phi_2'(x_0)\bigr)
 	<-\varepsilon\,\phi_2'(x_0)
\end{equation*}
having chosen $\phi_2$ positive in $J$.
Assuming $\psi_2$ to be given as in \eqref{relpsiphi} with $A=1$ and $x_\ast=x_0$,
and normalized so that $\max\psi_2=1$, from the latter inequality we infer the inequality
\begin{equation}\label{estsecond1}
	|\lambda_2|>\varepsilon\,I^{-1}\,\psi_2'(x_0),
\end{equation}
where
\begin{equation*}
	I:=\int_{x_0}^{\ell} \exp \left(\frac{1}{2\varepsilon} \int_{x_0}^x a^{\varepsilon}(y)\,dy \right)\,dx
\end{equation*}
Our next aim is to deduce an estimate from above on $I_\varepsilon$ and an estimate from below for 
$\psi_2'(x_0)$, in order to get a control on the size of the second eigenvalue $\lambda_2$.

From the definition of $I_\varepsilon$, since $x_0\geq \xi$, it follows
\begin{equation*}
	\begin{aligned}
	I_\varepsilon &\leq e^{|a^\varepsilon-a^0|_{{}_{L^1}}/2\varepsilon}
		 		\int_{x_0}^{\ell} e^{a_+(x-x_0)/2\varepsilon}\,dx
	 			=\frac{2\varepsilon}{|a_+|}\,e^{|a^\varepsilon-a^0|_{{}_{L^1}}/2\varepsilon}
	 			\bigl(1-e^{a_+(\ell-x_0)/2\varepsilon}\bigr)\\
		&\leq	 \frac{2\varepsilon}{|a_+|}\,e^{|a^\varepsilon-a^0|_{{}_{L^1}}/2\varepsilon}
			\leq C\,\varepsilon
	\end{aligned}
\end{equation*}
whenever $|a^\varepsilon-a^0|_{{}_{L^1}}\leq C\,\varepsilon$.
Thus, estimate \eqref{estsecond1} provisionally becomes 
\begin{equation}\label{estsecond2}
	|\lambda_2|>C\,\psi_2'(x_0)
\end{equation}
for some positive constant $C$, independent on $\varepsilon$.

Let the value $x_M$ be such that $\psi_2(x_M)=1$, minimum with such property.
From the assumption on the function $b^\varepsilon+\varepsilon\,\lambda$, it follows
$x_M\in(x_0,y_+)$.
Then there exists $x_L\in(x_0,x_M)$ such that
\begin{equation*}
	\psi_2'(x_L)=\frac{1}{x_M-x_0}\geq \frac{1}{y_+-\xi}\geq \frac{1}{c_{{}_{0}}\varepsilon}.
\end{equation*}
Since the function $\psi$ is concave in the interval $(x_0,y_+)$, we deduce 
\begin{equation*}
	\psi_2'(x_0)\geq \psi_2'(x_L) \geq \frac{1}{c_{{}_{0}}\varepsilon}.
\end{equation*}
Plugging into \eqref{estsecond2}, we end up with
$|\lambda_2|\geq C/\varepsilon$, for some $C$ independent on $\varepsilon$.

As a consequence, we can state a result relative to the second eigenvalue $\lambda_2$.
\vskip.15cm

\begin{proposition}\label{prop:secondevalue}
Let $a^\varepsilon$ be a family of functions sastisfying {\sf A1-2-3} 
then there exists $C>0$ such that  $\lambda^\varepsilon_2\leq -C/\varepsilon$
for any $\varepsilon$ sufficiently small.
\end{proposition}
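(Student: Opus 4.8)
The plan is to conjugate $\mathcal{L}^\varepsilon_\xi$ to the self-adjoint Schr\"odinger operator $\mathcal{M}^\varepsilon_\xi v=\varepsilon^2 v''-b^\varepsilon v$, with $b^\varepsilon$ as in \eqref{defb}, exploit the sign information on $b^\varepsilon+\varepsilon\lambda^\varepsilon$ provided by Lemma~\ref{lem:propb}, and then rerun for a node-reduced problem the ``integrate the eigenvalue equation'' computation already used for $\lambda_1^\varepsilon$. Since $\sigma(\mathcal{L}^\varepsilon_\xi)=\varepsilon^{-1}\sigma(\mathcal{M}^\varepsilon_\xi)$ and the conjugation \eqref{relpsiphi} preserves the nodal set, it suffices to prove $|\mu_2^\varepsilon|\geq C$ for the second eigenvalue $\mu_2^\varepsilon=\varepsilon\lambda_2^\varepsilon$ of $\mathcal{M}^\varepsilon_\xi$. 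First I would dispatch the standing hypothesis of Lemma~\ref{lem:propb}: if $\varepsilon\lambda_2^\varepsilon\leq-\frac14\alpha_0^2$ then already $\lambda_2^\varepsilon\leq-\frac{\alpha_0^2}{4\varepsilon}$ and there is nothing to prove, so we may assume $\varepsilon\lambda_2^\varepsilon>-\frac14\alpha_0^2$ and invoke the lemma to get properties {\sf B1--B3} for $b^\varepsilon+\varepsilon\lambda_2^\varepsilon$; in particular this function has two zeros $y^\varepsilon_\pm$ with $-\ell<y^\varepsilon_-<\xi<y^\varepsilon_+<\ell$ and $|y^\varepsilon_\pm-\xi|\leq c_0\varepsilon$.

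The next step is to localize the nodal point. Writing $\psi_2=\psi_2^\varepsilon$ for the second eigenfunction of $\mathcal{M}^\varepsilon_\xi$, one has $\varepsilon^2\psi_2''=(b^\varepsilon+\varepsilon\lambda_2^\varepsilon)\psi_2$, and by the Sturm--Liouville nodal theorem $\psi_2$ has exactly one zero $x_0\in(-\ell,\ell)$. On the intervals $(-\ell,y^\varepsilon_-)$ and $(y^\varepsilon_+,\ell)$ the potential $b^\varepsilon+\varepsilon\lambda_2^\varepsilon$ is positive by {\sf B2}, so $\psi_2$ is convex wherever it is positive and concave wherever it is negative; combined with the Dirichlet conditions $\psi_2(\pm\ell)=0$ this prevents any sign change on those two intervals, forcing $x_0\in(y^\varepsilon_-,y^\varepsilon_+)$ and hence $|x_0-\xi|\leq c_0\varepsilon$. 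Consequently, the restrictions of $\phi_2$ and $\psi_2$ to each of $(-\ell,x_0)$ and $(x_0,\ell)$ are first Dirichlet eigenfunctions of the corresponding operators, hence of one sign; assuming without loss of generality $x_0\geq\xi$, we fix attention on $J:=(x_0,\ell)$ with $\phi_2>0$ there.

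On $J$, integrating $\mathcal{L}^\varepsilon_\xi\phi_2=\lambda_2^\varepsilon\phi_2$ gives $\lambda_2^\varepsilon\int_J\phi_2=\varepsilon\bigl(\phi_2'(\ell)-\phi_2'(x_0)\bigr)<-\varepsilon\,\phi_2'(x_0)$, and rewriting in terms of $\psi_2$ normalized by $\max_J\psi_2=1$ yields $|\lambda_2^\varepsilon|>\varepsilon\,I^{-1}\psi_2'(x_0)$ as in \eqref{estsecond1}, with $I=\int_{x_0}^{\ell}\exp\bigl(\frac{1}{2\varepsilon}\int_{x_0}^{x}a^\varepsilon\bigr)\,dx$. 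Since $x_0\geq\xi$, $a_+<0$, and $|a^\varepsilon-a^0|_{{}_{L^1}}\leq C\varepsilon$, one bounds $I\leq C\varepsilon$, whereupon \eqref{estsecond1} sharpens to $|\lambda_2^\varepsilon|>C\,\psi_2'(x_0)$ as in \eqref{estsecond2}. For the matching lower bound on $\psi_2'(x_0)$, property {\sf B1} puts the maximum point $x_M$ of $\psi_2$ on $J$ inside $(x_0,y^\varepsilon_+)$; the mean value theorem on $(x_0,x_M)$ produces a point $x_L$ with $\psi_2'(x_L)=1/(x_M-x_0)\geq 1/(y^\varepsilon_+-\xi)\geq 1/(c_0\varepsilon)$, and the concavity of $\psi_2$ on $(x_0,y^\varepsilon_+)$ gives $\psi_2'(x_0)\geq\psi_2'(x_L)\geq 1/(c_0\varepsilon)$. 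Combining the estimates yields $|\lambda_2^\varepsilon|>C/\varepsilon$, which is the claim; shrinking $\varepsilon_0$ if necessary absorbs the $\varepsilon$-dependence of all constants.

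The step I expect to be the main obstacle is the nodal localization $x_0\in(y^\varepsilon_-,y^\varepsilon_+)$, together with the attendant claim that the two restricted problems are genuinely \emph{first}-eigenvalue problems (so that $\phi_2$ and $\psi_2$ are sign-definite on each piece). This is where hypotheses {\sf A1--A3} are really used --- entering through the monotonicity {\sf B1} and the positivity {\sf B2} of $b^\varepsilon+\varepsilon\lambda_2^\varepsilon$ outside an $O(\varepsilon)$ neighbourhood of $\xi$ --- and one must be careful that the ``classically forbidden region'' convexity argument is applied on intervals abutting the Dirichlet endpoints, not in the interior, since only there does it rule out a nodal point. Everything downstream is a deterministic chain of elementary one-dimensional estimates.
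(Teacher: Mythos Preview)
Your proposal is correct and follows essentially the same route as the paper: conjugate to the Schr\"odinger operator $\mathcal{M}^\varepsilon_\xi$, use Lemma~\ref{lem:propb} to localize the nodal point of $\psi_2$ in an $O(\varepsilon)$ neighbourhood of $\xi$, integrate the $\mathcal{L}^\varepsilon_\xi$-eigenvalue equation on the subinterval $J=(x_0,\ell)$ to obtain \eqref{estsecond1}, bound $I\leq C\varepsilon$ via $|a^\varepsilon-a^0|_{L^1}\leq C\varepsilon$, and finish with the mean-value/concavity estimate $\psi_2'(x_0)\geq 1/(c_0\varepsilon)$. Your explicit dichotomy on whether $\varepsilon\lambda_2^\varepsilon\leq-\tfrac14\alpha_0^2$ (to justify invoking Lemma~\ref{lem:propb}) and your spelled-out convexity argument for $x_0\in(y_-^\varepsilon,y_+^\varepsilon)$ are details the paper leaves implicit, but the logical skeleton is identical.
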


\subsection*{Spectral estimates}
Collecting the results of Propositions \ref{prop:firstevalue} and \ref{prop:secondevalue} give a complete
description for the spectrum of operator $\mathcal{L}^\varepsilon$ for small $\varepsilon$, under
assumptions {\sf A0-1-2-3} on the family $a^\varepsilon$.
\vskip.15cm

\begin{corollary}\label{prop:spectral}
Let $a^\varepsilon$ be a family of functions satisfying the assumptions {\sf A0-1-2-3} for some 
$\xi\in(-\ell,\ell)$, $a_+<0<a_-$.
Then there exist $C>0$ such that 
\begin{equation*}
	\lambda^\varepsilon_k\leq -C/\varepsilon
		\qquad\textrm{and}\qquad
	-C e^{-C/\varepsilon}\leq \lambda^\varepsilon_1<0.
\end{equation*}
for any $k\geq 2$.
\end{corollary}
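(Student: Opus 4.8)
The plan is to obtain Corollary \ref{prop:spectral} by assembling Propositions \ref{prop:firstevalue} and \ref{prop:secondevalue}, once the hypotheses of each are checked and the ordering of the eigenvalues is invoked. First I would recall the structure established earlier in this section: the Liouville-type substitution $v(x)=\exp\bigl(-\tfrac{1}{2\varepsilon}\int_{x_0}^x a^{\varepsilon}\bigr)\,u(x)$ conjugates $\mathcal{L}^{\varepsilon}_\xi$ (with Dirichlet data) to the self-adjoint operator $\mathcal{M}^{\varepsilon}_\xi v=\varepsilon^2 v''-b^{\varepsilon} v$, whose potential $b^{\varepsilon}$ defined in \eqref{defb} is uniformly bounded thanks to assumption {\sf A0}. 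Since $\mathcal{M}^{\varepsilon}_\xi$ is a regular Sturm--Liouville operator on the bounded interval $(-\ell,\ell)$, its spectrum is a strictly decreasing sequence of simple eigenvalues $\mu^\varepsilon_1>\mu^\varepsilon_2>\cdots\to-\infty$, and hence, since $\sigma(\mathcal{L}^{\varepsilon}_\xi)=\varepsilon^{-1}\sigma(\mathcal{M}^{\varepsilon}_\xi)$, the eigenvalues $\lambda^\varepsilon_k=\mu^\varepsilon_k/\varepsilon$ form a decreasing sequence. In particular $\lambda^\varepsilon_k\leq\lambda^\varepsilon_2$ for every $k\geq 2$, so it is enough to control $\lambda^\varepsilon_1$ and $\lambda^\varepsilon_2$.

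For the first eigenvalue I would invoke Proposition \ref{prop:firstevalue}, which requires {\sf A0} together with the $L^1$-closeness $|a^\varepsilon-a^0|_{{}_{L^1}}\leq C\varepsilon$. This last bound is not stated verbatim among {\sf A0--3} but follows from {\sf A2} and {\sf A0}: splitting $\int_{-\ell}^{\ell}|a^\varepsilon-a^0|\,dx$ over $\{|x-\xi|\geq c_{{}_{0}}\varepsilon\}$, where the integrand is $\leq C\varepsilon$ by {\sf A2}, and over $\{|x-\xi|<c_{{}_{0}}\varepsilon\}$, a set of measure $2c_{{}_{0}}\varepsilon$ on which the integrand is $\leq|a^\varepsilon|_{{}_{\infty}}+|a^0|_{{}_{\infty}}\leq C$ by {\sf A0}, both contributions are $O(\varepsilon)$. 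Hence Proposition \ref{prop:firstevalue} applies and yields $-C\,e^{-c/\varepsilon}\leq\lambda^\varepsilon_1<0$, the strict sign having in fact already been established in full generality via the identity $\lambda^\varepsilon_1=\varepsilon\,(u'(\ell)-u'(-\ell))$ for the positive normalized first eigenfunction.

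For $k\geq 2$ I would apply Proposition \ref{prop:secondevalue} --- whose hypotheses {\sf A1--2--3} are assumed --- to get $\lambda^\varepsilon_2\leq -C/\varepsilon$ for all $\varepsilon$ sufficiently small, and then conclude $\lambda^\varepsilon_k\leq\lambda^\varepsilon_2\leq -C/\varepsilon$ for every $k\geq 2$ from the monotone ordering noted above. Finally I would relabel, replacing the finitely many constants produced by the two propositions with a single $C>0$ and, if necessary, shrinking the threshold on $\varepsilon$, to match the form of the statement.

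As for difficulty, there is essentially no new obstacle: Propositions \ref{prop:firstevalue} and \ref{prop:secondevalue} carry the whole analytic weight. The only points that need a word of care are the derivation of the $L^1$ bound from {\sf A2}+{\sf A0}, so that Proposition \ref{prop:firstevalue} is genuinely applicable, and the passage from the single index $k=2$ to all $k\geq 2$, which rests on the (standard) fact that the eigenvalues of the self-adjoint model $\mathcal{M}^{\varepsilon}_\xi$ form a decreasing sequence; both are routine.
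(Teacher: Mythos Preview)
Your proposal is correct and matches the paper's approach exactly: the paper offers no formal proof beyond stating that the corollary follows by ``collecting the results of Propositions \ref{prop:firstevalue} and \ref{prop:secondevalue}.'' Your added details---deriving $|a^\varepsilon-a^0|_{L^1}\leq C\varepsilon$ from {\sf A0}+{\sf A2} and invoking the monotone ordering of the Sturm--Liouville eigenvalues to pass from $k=2$ to all $k\geq 2$---fill in precisely the routine steps the paper leaves implicit.
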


Hypotheses {\sf A0-1-2-3} are satisfied in the case of a family of function $a^\varepsilon$
that is a (small) perturbation of a function $\bar a^\varepsilon$ with the form
\begin{equation*}
	\bar a^\varepsilon(x)=A_-\left(\frac{x-\xi}{\varepsilon}\right)\chi_{{}_{(-L,\xi)}}(x)
		+A_+\left(\frac{x-\xi}{\varepsilon}\right)\chi_{{}_{(\xi,L)}}(x).
\end{equation*}
for some decreasing smooth bounded functions $A_\pm$, bounded together with their first and 
second order derivatives, and such that $A_\pm(\pm\infty)=a_\pm$ and $A_\pm'(\pm\infty)=0$.

\section*{Acknowledgment}
The authors are thankful to the anonymous referees for their valuable comments
and suggestions which helped them improving the content and the overall organization
of the article.

\end{document}